\newtheorem{thm}{Theorem}
\theoremstyle{plain}
\newtheorem{conjecture}{Conjecture}
\newtheorem{cor}{Corollary}
\newtheorem{lem}{Lemma}
\newtheorem{problem}{Problem}
\newtheorem{prop}{Proposition}
\newtheorem{rem}{Remark}
\numberwithin{equation}{section}
\newcommand{\seq}[1]{\{#1\}_{k=0}^{\infty}}
\newcommand{\R}{\mathbb{R}}
\newcommand{\N}{\mathbb{N}}
\newcommand{\C}{\mathbb{C}}
\title{Polynomially Interpolated Legendre Multiplier Sequences}
\author{Matthew Chasse, Tam\'as Forg\'acs, and Andrzej Piotrowski}
\begin{document}
\begin{abstract}
%Let $p$ be a real polynomial and consider the sequence it interpolates $\{p(k)\}_{k=0}^{\infty}$. We prove that if the sequence is a multiplier sequence for the Legendre basis, then it must have the form $p(x) = h(x^2+x)$, where $h\in\mathbb{R}[x]$. We also prove that a certain falling-factorial-type sequence is a multiplier sequence for the Legendre basis. 
We prove that every multiplier sequence for the Legendre basis which can be interpolated by a polynomial has the form $\seq{h(k^2+k)}$, where $h\in\mathbb{R}[x]$. We also prove that a non-trivial collection of polynomials of a certain form interpolate multiplier sequences for the Legendre basis, and we state conjectures on how to extend these results. 
\end{abstract}

\maketitle

\section{Introduction}\label{s:Introduction}
Over the past decade, there has been an effort to characterize multiplier sequences acting on various orthogonal bases for $\mathbb{R}[x]$ (see \cite{bates}, \cite{bdfu}, \cite{bc}, \cite{bo}, \cite{nreup}, \cite{fhms}, \cite{FP}, \cite{P}). The present work focuses on the Legendre polynomial basis $\{P_k\}_{k=0}^{\infty}$. Recall that, for each nonnegative integer $k$, the $k$-th Legendre polynomial $P_k$ can be defined as the polynomial solution to Legendre's differential equation  
$$
(x^2-1)y'' + 2 x y' - k(k+1) y= 0,
$$
normalized so that $P_k(1)=1$. As such, our work is a continuation of the investigations carried out in \cite{bdfu} and \cite{fhms}. 

We now recall some of the relevant terminology in this subject area that will be used freely throughout the paper. A polynomial is called {\it hyperbolic} if all of its zeros are real. An operator on $\mathbb{R}[x]$ that maps the class of hyperbolic polynomials into itself is called a {\it hyperbolicity preserver}. If $T$ is a linear operator on $\mathbb{R}[x]$ which is diagonal with respect to the Legendre basis, and if $T$ is also a hyperbolicity preserver, then the corresponding eigenvalue sequence $\{\gamma_k\}_{k=0}^{\infty}$ for which $T[P_k(x)] = \gamma_k P_k(x)$ is called a {\it multiplier sequence for the Legendre basis}. 

Although the bulk of the present work has to do with hyperbolicity preserving operators, there will also be a brief discussion (in Section \ref{s:collection}) of complex zero decreasing operators and sequences which are defined as follows. If an operator $T$ on $\mathbb{R}[x]$ has the property that $Z_c(T[p])\leq Z_c(p)$ for all polynomials $p$, where $Z_c(f)$ denotes the number of nonreal zeros of $f$, then $T$ is called a {\it complex zero decreasing operator}.  If $T$ is a linear operator on $\mathbb{R}[x]$ which is diagonal with respect to the Legendre basis and $T$ is a complex zero decreasing operator, then the corresponding eigenvalue sequence is called a {\it complex zero decreasing sequence for the Legendre basis}. It is straightforward to see that every complex zero decreasing operator is a hyperbolicity preserving operator. There are, however, hyperbolicity preserving operators which are not complex zero decreasing operators. Finally, we are obliged to note that the function which is identically zero plays a peculiar role in this theory. As usual, we will declare that this function has only real zeros and define $Z_c(0)=0$. 

Central to the theory of multiplier sequences is the {\it Laguerre-P\'olya Class} of functions, denoted $\mathcal{L-P}$, which can be defined as follows. A function belongs to the class $\mathcal{L-P}$ if and only if it is a uniform limit, on compact subsets of $\mathbb{C}$, of polynomials with only real zeros. Equivalently, a real entire function $\varphi\in \mathcal{L-P}$, if and only if it can be expressed in the form
$$\varphi(x) = cx^me^{-ax^2 + bx}\prod_{k=1}^\omega\left(1+\frac{x}{x_k}\right)e^{\frac{-x}{x_k}} \text{\hspace{5 mm}} (0\le\omega\le\infty),$$
where $b,c,x_k \in \mathbb{R}$, $m$ is a non-negative integer, $a\ge 0$, $x_k\neq 0$, and 
$\sum_{k=1}^\omega\frac{1}{x_k^2}<\infty$.

\begin{prop}\cite[p. 242]{CCturan}\label{TI}
Let $\varphi(x)=\sum_{k=0}^\infty \frac{\gamma_k}{k!}x^k\in\mathcal{L-P}$.  Then the Tur\'{a}n inequalities hold for the Taylor coefficients of $\varphi$; that is,
\begin{equation}
\gamma_k^2-\gamma_{k-1}\gamma_{k+1} \ge 0 \text{, \hspace{5mm} } k=1, 2, 3, \ldots.
\end{equation}
\end{prop}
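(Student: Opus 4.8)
The plan is to reduce the claim to the analogous statement for polynomials having only real zeros, and then to deduce that statement from Newton's inequalities. First I would invoke the definition of $\mathcal{L-P}$ as the class of functions obtainable as limits, uniform on compact subsets of $\mathbb{C}$, of polynomials with only real zeros. Writing $\varphi$ as such a limit of polynomials $p_n$, each hyperbolic, the Weierstrass convergence theorem guarantees that all derivatives of $p_n$ converge (uniformly on compacta) to those of $\varphi$; in particular, if $p_n(x)=\sum_k \tfrac{\gamma_k^{(n)}}{k!}x^k$, then $\gamma_k^{(n)}\to\gamma_k$ for each fixed $k$. Because the Tur\'an expression $\gamma_k^2-\gamma_{k-1}\gamma_{k+1}$ depends continuously on finitely many coefficients and the desired inequality is non-strict (a closed condition, stable under limits), it suffices to prove
\[
(\gamma_k^{(n)})^2-\gamma_{k-1}^{(n)}\gamma_{k+1}^{(n)}\ge 0
\]
for every $n$ and then let $n\to\infty$.

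This reduces matters to a single polynomial $p(x)=\sum_{j=0}^d \tfrac{\gamma_j}{j!}x^j$ of degree $d$ with only real zeros, for which I claim $\gamma_k^2\ge \gamma_{k-1}\gamma_{k+1}$. When $\gamma_{k-1}\gamma_{k+1}\le 0$ the claim is immediate, since the left-hand side is nonnegative, so I may assume $\gamma_{k-1}\gamma_{k+1}>0$; in particular $k+1\le d$ and $\gamma_{k-1}\ne 0$. The heart of the argument is the classical \emph{differentiate-and-reverse} reduction. Differentiating $p$ exactly $k-1$ times yields a hyperbolic polynomial (derivatives of hyperbolic polynomials are hyperbolic, by Rolle's theorem) of degree $d-k+1$ whose three lowest-order coefficients are $\gamma_{k-1}$, $\gamma_k$, and $\tfrac{\gamma_{k+1}}{2}$. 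Passing to the reversed polynomial $x^{d-k+1}\,p^{(k-1)}(1/x)$ preserves real-rootedness, since its zeros are the reciprocals of the (real, nonzero, as $\gamma_{k-1}\ne 0$) zeros, and this moves those three coefficients to the top. Differentiating the reversal $d-k-1$ further times produces a hyperbolic quadratic
\[
\tfrac{(d-k+1)!}{2}\,\gamma_{k-1}\,x^2 + (d-k)!\,\gamma_k\,x + (d-k-1)!\,\tfrac{\gamma_{k+1}}{2},
\]
whose discriminant must be nonnegative. A short simplification turns this into $\gamma_k^2\ge \tfrac{d-k+1}{d-k}\,\gamma_{k-1}\gamma_{k+1}$, and since $\tfrac{d-k+1}{d-k}\ge 1$ while $\gamma_{k-1}\gamma_{k+1}>0$ in the surviving case, the Tur\'an inequality $\gamma_k^2\ge \gamma_{k-1}\gamma_{k+1}$ follows.

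The main obstacle I anticipate is bookkeeping rather than conceptual: the reversal step requires care because a zero of $p^{(k-1)}$ at the origin (equivalently $\gamma_{k-1}=0$) drops the degree of the reversed polynomial and must be excluded or absorbed into the trivial case, and one must verify that the differentiation counts $k-1$ and $d-k-1$ are legitimate (which is exactly where the assumption $k+1\le d$ enters). The only genuinely analytic point is the passage to the limit, for which the Weierstrass theorem on term-by-term convergence of derivatives of locally uniformly convergent sequences of analytic functions supplies the convergence $\gamma_k^{(n)}\to\gamma_k$; everything else is the algebra of Newton's inequalities specialized to three consecutive Taylor coefficients.
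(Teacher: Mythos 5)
The paper does not prove Proposition \ref{TI} at all: it is quoted from Craven and Csordas \cite{CCturan} and used as a black box, so there is no in-paper argument to compare against. Your proposal is a correct, self-contained proof, and it is essentially the classical derivation of the Tur\'an inequalities from Newton's inequalities. The two-stage structure is sound: the reduction to hyperbolic polynomials via the locally uniform limit definition of $\mathcal{L-P}$ is legitimate because Weierstrass gives coefficientwise convergence $\gamma_k^{(n)}\to\gamma_k$ and the inequality is a closed condition; and the polynomial case is handled correctly by the differentiate--reverse--differentiate reduction to a hyperbolic quadratic. I checked the bookkeeping: the three lowest coefficients of $p^{(k-1)}$ are indeed $\gamma_{k-1}$, $\gamma_k$, $\gamma_{k+1}/2$; the reversal has exact degree $d-k+1$ precisely because you have reduced to the case $\gamma_{k-1}\neq 0$; and the discriminant of the resulting quadratic yields $\gamma_k^2\ge\frac{d-k+1}{d-k}\,\gamma_{k-1}\gamma_{k+1}$, which is even stronger than the Tur\'an inequality when $\gamma_{k-1}\gamma_{k+1}>0$ (the only case not disposed of trivially). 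You also correctly flag the two genuine pitfalls --- a zero of $p^{(k-1)}$ at the origin and the legitimacy of the differentiation counts --- and both are absorbed by the case split on the sign of $\gamma_{k-1}\gamma_{k+1}$. The only stylistic remark is that one could instead cite Newton's inequalities directly for the normalized coefficients, but your from-scratch derivation is exactly how those inequalities are proved anyway, so nothing is lost.
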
  

We refer the reader to \cite{CCsurvey} for a comprehensive treatment of the Laguerre-P\'olya Class. For our purposes, we shall only need the fact that an even function in $\mathcal{L-P}$ must have coefficients (of even powers of the variable) which alternate in sign; this follows immediately from Proposition \ref{TI}. 

In this paper, we focus mainly on sequences that can be interpolated by polynomials. That is to say, sequences of the form $\{p(k)\}_{k=0}^{\infty}$ where $p\in\mathbb{R}[x]$. In Theorem \ref{x^2+x} of Section \ref{s:Form and Order} we prove that any multiplier sequence for the Legendre basis which can be interpolated by a polynomial must have the form $\seq{h(k^2+k)}$, where $h\in\mathbb{R}[x]$. In Section \ref{s:collection}, we prove that a certain type of differential operator is hyperbolicity preserving (Theorem \ref{fall}) and use that result to establish the existence of a new and non-trivial collection of multiplier sequences for the Legendre basis (Corollary \ref{fallseq}). The paper concludes with two conjectures concerning Legendre multiplier sequences interpolated by polynomials of even degree.

\section{Preliminaries}\label{s:Preliminaries}
It is known that any linear operator $T$ on $\mathbb{R}[x]$ can be represented as a differential operator with coefficients $S_k\in\mathbb{R}[x]$, i.e., 
$$
T = \sum_{k=0}^{\infty} S_k(x) D^k.
$$
The goal of this section is to develop a formula for $S_k(0)$ in the case where $T$ is diagonal with respect to the Legendre basis and has an eigenvalue sequence that can be interpolated by a polynomial. This formula will be useful in proving Theorem \ref{thm:noodd} in the next section, which states that a polynomially interpolated multiplier sequence for the Legendre basis must be interpolated by a polynomial of even degree. This formula will be useful in proving Theorem \ref{thm:noodd} in the next section, which states that if a multiplier sequence for the Legendre basis is interpolated by a polynomial $p$, then $p$ must be of even degree.

\begin{lem} \label{Q0lem}
For any differential operator
$$
T = \sum_{k=0}^{\infty} Q_k(x) D^k
$$
on $\R[x]$, the coefficient polynomials evaluated at 0 can be computed by 
$$
Q_n(0) =  \frac{1}{n!} \left[T[x^n]\right]_{x=0} \qquad (n=0, 1, 2, \dots).
$$
\end{lem}

\begin{proof}
The result follows from calculating 
$$
T[x^n] = Q_0(x) x^n + n x^{n-1} Q_1(x) + \cdots + n! Q_n(x),
$$
and evaluating this expression at $x=0$.
\end{proof}

We now specialize the previous lemma to operators which are diagonal with respect to the Legendre polynomial basis. Throughout the rest of this paper we use the Pocchammer symbol to denote the rising factorial
$$
(\alpha)_k = \alpha(\alpha+1)(\alpha+2)\cdots (\alpha+k-1), \qquad (\alpha\in\R, k\in\N)
$$
and follow the convention that $(\alpha)_0=1$.
\begin{lem}\label{cor1}
Let $T$ be a linear operator on $\mathbb{R}[x]$ that satisfies $T[P_n]=\gamma_n P_n$, where $\{\gamma_n\}_{n=0}^{\infty}$ is a sequence of real numbers, and let the differential operator representation of $T$ be given by 
$$
T = \sum_{k=0}^{\infty} S_k(x) D^k.
$$ 
Then
\begin{equation}\label{even}
S_{2m+1}(0)=0 \qquad (m=0, 1, 2, \dots)
\end{equation}
and
\begin{equation}\label{s2m1}
S_{2m}(0) =  \frac{1}{2\cdot 4^m m!} \sum_{k=0}^{m} \binom{m}{k}\frac{(4k + 1) \gamma_{2k} (-1)^{k}}{(k+1/2)_{m+1}}\qquad (m=0, 1, 2, \dots).
\end{equation}
\end{lem}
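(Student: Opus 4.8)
The plan is to combine Lemma \ref{Q0lem} with the expansion of monomials in the Legendre basis. By Lemma \ref{Q0lem} we have $S_n(0) = \frac{1}{n!}\left[T[x^n]\right]_{x=0}$, so I would first write $x^n = \sum_{j=0}^n c_{n,j}P_j(x)$, apply the diagonal action of $T$ to obtain $T[x^n] = \sum_{j=0}^n c_{n,j}\gamma_j P_j(x)$, and then evaluate at the origin to get $\left[T[x^n]\right]_{x=0} = \sum_{j=0}^n c_{n,j}\gamma_j P_j(0)$. Everything then reduces to knowing the expansion coefficients $c_{n,j}$ and the values $P_j(0)$.

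The parity observation does the heavy lifting for \eqref{even}. Since $x^n$ has the same parity as $n$ and each $P_j$ has parity $(-1)^j$, only indices $j\equiv n \pmod 2$ occur in the expansion; moreover $P_j(0)=0$ whenever $j$ is odd, because $P_j$ is then an odd function. Hence for $n=2m+1$ every surviving term in $\sum_j c_{n,j}\gamma_j P_j(0)$ carries an odd-index Legendre value at $0$, so the whole sum vanishes and $S_{2m+1}(0)=0$.

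For the even case $n=2m$ I would compute the coefficients explicitly. Orthogonality of the Legendre polynomials together with the normalization $\int_{-1}^1 P_j^2\,dx = \frac{2}{2j+1}$ gives $c_{2m,2k} = \frac{4k+1}{2}\int_{-1}^1 x^{2m}P_{2k}(x)\,dx$, which is exactly where the factor $(4k+1)$ in \eqref{s2m1} originates. Substituting the classical evaluation $P_{2k}(0) = (-1)^k\binom{2k}{k}4^{-k}$ then supplies the alternating sign $(-1)^k$, so the structure of \eqref{s2m1} is already visible before any simplification.

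The remaining work, and the main obstacle, is to evaluate $\int_{-1}^1 x^{2m}P_{2k}(x)\,dx$ in closed form and reconcile the resulting factorial expression with the Pochhammer form in \eqref{s2m1}. I would use the standard evaluation $\int_{-1}^1 x^nP_\ell(x)\,dx = \frac{n!\,2^{\ell+1}\,((n+\ell)/2)!}{((n-\ell)/2)!\,(n+\ell+1)!}$, valid for $n\ge\ell$ with $n-\ell$ even, assemble the sum, and cancel the common factor $(2m)!$ together with the powers of $2$ (note $2^{2k+1}/4^k=2$). The bookkeeping then amounts to rewriting $(k+1/2)_{m+1} = \frac{(2k+2m+2)!\,k!}{2^{2m+2}\,(k+m+1)!\,(2k)!}$ via the double-factorial identity for products of consecutive odd integers, after which the per-term match collapses to the elementary identity $\frac{(m+k)!}{(2m+2k+1)!} = \frac{2\,(m+k+1)!}{(2m+2k+2)!}$. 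Verifying this reconciliation is purely computational, but it is where all of the care is required.
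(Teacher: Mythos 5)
Your proposal is correct and follows essentially the same route as the paper: both apply Lemma \ref{Q0lem} to the Legendre expansion of $x^n$ and then invoke $P_{2k+1}(0)=0$ and $P_{2k}(0)=(-1)^k(1/2)_k/k!$ to obtain \eqref{even} and \eqref{s2m1}. The only difference is that you derive the expansion coefficients of $x^{2m}$ from orthogonality and the standard integral $\int_{-1}^{1}x^nP_{\ell}(x)\,dx$, whereas the paper quotes the ready-made expansion from Rainville; your Pochhammer bookkeeping checks out.
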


\begin{proof}
We apply Lemma \ref{Q0lem} to the expansion of $x^n$ in terms of Legendre polynomials (see \cite[p. 181]{R}), 
$$
x^n = \frac{n!}{2^n} \sum_{k=0}^{[n/2]} \frac{(2n - 4k + 1) P_{n-2k}(x)}{k! (3/2)_{n-k}}
$$
to obtain 
$$
S_n(0) = \frac{1}{2^n} \sum_{k=0}^{[n/2]} \frac{(2n - 4k + 1) \gamma_{n-2k} P_{n-2k}(0)}{k! (3/2)_{n-k}}.
$$
The relation (see \cite[p. 158]{R})
$$
P_{2k+1}(0) = 0 \qquad (k=0, 1, 2, \dots)
$$
shows that $S_{2m+1}(0) = 0$ for all $m$. The relation (see \cite[p. 158]{R})
$$
P_{2k}(0) = \frac{(-1)^k(1/2)_k}{k!} \qquad (k=0, 1, 2, \dots)
$$ 
yields
\begin{align*}
S_{2m}(0) &=  \frac{1}{4^m} \sum_{k=0}^{m} \frac{(4m - 4k + 1) \gamma_{2m-2k} (-1)^{m-k}(1/2)_{m-k}}{k! (3/2)_{2m-k} (m-k)!}\\
 &=\frac{1}{4^m m!} \sum_{k=0}^{m} \binom{m}{k} (4k + 1) \gamma_{2k} (-1)^{k} \frac{(1/2)_{k}}{(3/2)_{m+k}}
\end{align*}
which can be re-written in the form (\ref{s2m1}) as desired.
\end{proof}

The fact that the odd indexed coefficient polynomials vanish at zero (equation (\ref{even})) will be useful to us in the next section. Continuing to specialize, we now focus on the even indexed coefficient polynomials in the case where the eigenvalue sequence $\{\gamma_k\}_{k=0}^{\infty}$ can be interpolated by a polynomial $p$. In this case, we note that the quantity $(4k + 1) \gamma_{2k}= (4k+1) p(2k)$ that appears in equation (\ref{s2m1}) is a polynomial in $k$. This motivates us to find a closed form for the sum
\begin{equation}\label{sigma}
\sigma_{m,n} = \sum_{k=0}^{m} \binom{m}{k} \frac{k^n(-1)^k}{(k+1/2)_{m+1}}. 
\end{equation}

\begin{lem}\label{sigmalem1}
For $m,n=0, 1, 2, \dots$, the quantity $\sigma_{m,n}$ from equation (\ref{sigma}) satisfies
$$
\sigma_{m,n} = \left[ \theta^n  \frac{2}{(3/2)_m}{}_2F_1(-m, 1/2; m+3/2; x) \right]_{x=1} \qquad \left(\theta := x \frac{d}{dx}\right).
$$
\end{lem}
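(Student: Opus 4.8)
The plan is to exploit the fact that the Euler operator $\theta = x\frac{d}{dx}$ acts on monomials as $\theta\, x^k = k\,x^k$, and hence $\theta^n x^k = k^n x^k$ for every $n\ge 0$. This is precisely the mechanism that produces the factor $k^n$ appearing in the definition \eqref{sigma} of $\sigma_{m,n}$. Accordingly, I would first introduce the generating polynomial
$$
F_m(x) = \sum_{k=0}^{m} \binom{m}{k} \frac{(-1)^k}{(k+1/2)_{m+1}}\, x^k,
$$
so that applying $\theta^n$ termwise and then setting $x=1$ gives $\left[\theta^n F_m(x)\right]_{x=1} = \sigma_{m,n}$. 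With this reduction in hand, the lemma follows as soon as one identifies $F_m(x)$ with the stated hypergeometric expression.

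The second step is to evaluate the hypergeometric function as an explicit polynomial. Since its first parameter is the negative integer $-m$, the series ${}_2F_1(-m, 1/2; m+3/2; x) = \sum_{j\ge 0} \frac{(-m)_j (1/2)_j}{(m+3/2)_j}\frac{x^j}{j!}$ terminates after the term $j=m$. Using the elementary identity $(-m)_j/j! = (-1)^j\binom{m}{j}$ (valid for $0\le j\le m$), I would rewrite
$$
\frac{2}{(3/2)_m}\,{}_2F_1(-m, 1/2; m+3/2; x) = \frac{2}{(3/2)_m}\sum_{k=0}^m (-1)^k \binom{m}{k}\frac{(1/2)_k}{(m+3/2)_k}\, x^k.
$$
It then remains to match this against $F_m(x)$ coefficient by coefficient, i.e. to verify the Pochhammer identity
$$
\frac{2}{(3/2)_m}\cdot\frac{(1/2)_k}{(m+3/2)_k} = \frac{1}{(k+1/2)_{m+1}} \qquad (0\le k\le m).
$$

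I expect this last identity to be the only step requiring any care, and I would dispatch it by rewriting each rising factorial as a ratio of Gamma functions via $(\alpha)_j = \Gamma(\alpha+j)/\Gamma(\alpha)$. After cancellation — using $\Gamma(3/2)/\Gamma(1/2) = 1/2$ and noting that the two factors $\Gamma(m+3/2)$ cancel — both sides collapse to $\Gamma(k+1/2)/\Gamma(k+m+3/2)$, confirming the claim. (Alternatively one could clear denominators and induct on $k$, but the Gamma-function computation is cleaner.) This establishes $F_m(x) = \frac{2}{(3/2)_m}\,{}_2F_1(-m, 1/2; m+3/2; x)$, and combining with the first step yields the asserted formula for $\sigma_{m,n}$. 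The conceptual crux is the opening observation that the single polynomial $F_m$ simultaneously encodes the whole family $\{\sigma_{m,n}\}_{n\ge 0}$ through repeated application of $\theta$; once that is recognized, the remainder is a routine identification of a terminating hypergeometric polynomial together with the rising-factorial identity above, neither of which presents a genuine obstacle.
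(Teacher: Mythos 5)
Your proposal is correct and follows essentially the same route as the paper's proof: both identify the generating polynomial $\sum_{k=0}^{m}\binom{m}{k}\frac{(-1)^k x^k}{(k+1/2)_{m+1}}$ with $\frac{2}{(3/2)_m}\,{}_2F_1(-m,1/2;m+3/2;x)$ via the identity $(-m)_k/k!=(-1)^k\binom{m}{k}$ and the Gamma-function computation $\frac{2\,(1/2)_k}{(m+3/2)_k(3/2)_m}=\frac{\Gamma(k+1/2)}{\Gamma(m+k+3/2)}=\frac{1}{(k+1/2)_{m+1}}$, and then apply $\theta^n$ termwise using $\theta x^k=kx^k$. No gaps.
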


\begin{proof}
First note the relation
\begin{equation}\label{e1}
\frac{(-m)_k}{k!} = 
\begin{cases}
\binom{m}{k} (-1)^k  & \qquad k=0, 1, 2, \dots, m,\\
0 & \qquad k=m+1, m+2, \dots\\
\end{cases}
\end{equation}
which follows directly from equation (3) of \cite[p. 58]{R} and the definition of the rising factorial. Furthermore, from (see \cite[p. 23, equation (7)]{R})
\begin{equation}\label{gampoc}
(\alpha)_n = \frac{\Gamma(\alpha+n)}{\Gamma(\alpha)} \qquad (\alpha \notin \{0, -1, -2, \dots \})
\end{equation}
we have, for $k\in\{0, 1, 2, \dots m\}$,
\begin{equation}\label{e2}
\frac{2\cdot (1/2)_k}{(m+3/2)_k (3/2)_m} = \frac{\Gamma(k+1/2)}{\Gamma(m+k+3/2)} = \frac{1}{(k+1/2)_{m+1}}.
\end{equation}
Combining equations (\ref{e1}) and (\ref{e2}), we see that
\begin{align*}
\sum_{k=0}^{m} \binom{m}{k} \frac{(-1)^k x^k}{(k+1/2)_{m+1}} &= \frac{2}{(3/2)_m} \sum_{k=0}^{m} \frac{(-m)_k (1/2)_k x^k}{ k!(m+3/2)_k }\\
 &= \frac{2}{(3/2)_m}F(-m, 1/2; m+3/2; x),
\end{align*}
where $F(a,b;c;x)$ is the hypergeometric function (see \cite[p. 45]{R}). The conclusion that
$$
\sigma_{m,n} = \left[ \theta^n  \frac{2}{(3/2)_m}{}_2F_1(-m, 1/2; m+3/2; x) \right]_{x=1} \qquad \left(\theta := x \frac{d}{dx}\right)
$$
now follows from the fact that $\theta x^k = k x^k$, which holds for all $k$.
\end{proof}

The next lemma will be useful in obtaining another explicit formula for $\sigma_{m,n}$.

\begin{lem}\label{Q_k}
The coefficient polynomials $Q_k(x)$ in the expansion
$$
(xD)^n = \sum_{k=0}^{n} Q_k(x) D^k
$$
are given by $Q_k(x) = S(n,k) x^k,$ where 
\begin{equation}\label{c_k}
S(n,k) = \frac{1}{k!}\sum_{j=0}^k \binom{k}{j} (-1)^{k-j}j^n
\end{equation}
are the Stirling numbers of the second kind.
\end{lem}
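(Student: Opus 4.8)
The plan is to argue by induction on $n$, using the fact that the Stirling numbers of the second kind are characterized by a simple two-term recurrence together with their initial values. The engine of the induction is a single commutation computation: applying the product rule, for each $k$ one has
$$
xD\left(x^k D^k\right) = x\left(k\,x^{k-1}D^k + x^k D^{k+1}\right) = k\,x^k D^k + x^{k+1} D^{k+1}.
$$
The base case $n=0$ is immediate, since $(xD)^0$ is the identity, whose only nonzero coefficient polynomial is $Q_0(x)=x^0$, matching $S(0,0)=1$ and $S(0,k)=0$ for $k\ge 1$.

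For the inductive step I would assume $(xD)^n = \sum_{k=0}^{n} S(n,k)\,x^k D^k$, apply $xD$ on the left, and invoke the commutation identity term by term. Reindexing the shifted sum and collecting the contributions to each $x^k D^k$ yields
$$
(xD)^{n+1} = \sum_{k=0}^{n+1}\bigl(k\,S(n,k) + S(n,k-1)\bigr)\,x^k D^k.
$$
This both preserves the structural claim that every coefficient polynomial is a monomial of the form $c_k x^k$ and exhibits the coefficient of $x^k D^k$ as $k\,S(n,k)+S(n,k-1)$, which is exactly the classical recurrence satisfied by the Stirling numbers of the second kind. A clean alternative for pinning down the monomial coefficients, once one knows $Q_k(x)=c_k x^k$, is to apply both sides to $x^m$: since $(xD)^n x^m = m^n x^m$ and $x^k D^k x^m = m(m-1)\cdots(m-k+1)\,x^m$, one obtains the polynomial identity $m^n = \sum_k c_k\, m(m-1)\cdots(m-k+1)$, which is precisely the defining expansion of $m^n$ in the falling-factorial basis and hence forces $c_k = S(n,k)$.

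The only remaining point is to confirm that the explicit quantity defined in $(\ref{c_k})$ coincides with the sequence $S(n,k)$ generated by the recurrence. This is the one piece of genuine bookkeeping, though it is entirely routine: one verifies that $\frac{1}{k!}\sum_{j=0}^{k}\binom{k}{j}(-1)^{k-j}j^n$ satisfies $S(0,0)=1$, $S(n,0)=\delta_{n,0}$, and the two-term recurrence $S(n+1,k)=k\,S(n,k)+S(n,k-1)$, by a direct manipulation of the binomial sums (splitting $\binom{k}{j}=\binom{k-1}{j-1}+\binom{k-1}{j}$ and writing $j^{n+1}=j\cdot j^{n}$). This closed form is the standard expression counting surjections from an $n$-set onto a $k$-set divided by $k!$, so one may equally well cite it. Since a solution of the recurrence with the prescribed initial data is unique, the two descriptions agree, and the identity $Q_k(x)=S(n,k)x^k$ follows. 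The main obstacle, to the extent there is one, is thus not the operator identity itself but the purely combinatorial verification that the given formula reproduces the Stirling recurrence.
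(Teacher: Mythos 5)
Your argument is correct, but it takes a genuinely different route from the paper. The paper's proof is essentially one line: it invokes a general formula (from Chasse's dissertation) expressing the coefficient polynomials of \emph{any} linear operator $T$ on $\C[x]$ as
$$
Q_k(x) = \frac{1}{k!}\sum_{j=0}^k \binom{k}{j}T[x^j](-x)^{k-j},
$$
and then substitutes $T=(xD)^n$, using $(xD)^n x^j = j^n x^j$ to land immediately on the explicit sum in equation (\ref{c_k}) --- no recurrence and no separate identification with the Stirling numbers is needed, since the closed form \emph{is} the definition adopted in the statement. Your induction via the commutation identity $xD\left(x^kD^k\right) = k\,x^kD^k + x^{k+1}D^{k+1}$ is self-contained at the operator level and cleanly exhibits \emph{why} the Stirling recurrence appears, which is arguably more illuminating; the price is that you must separately verify (or cite) that the alternating binomial sum in (\ref{c_k}) satisfies $S(n+1,k)=kS(n,k)+S(n,k-1)$ with the right initial data, a purely combinatorial step the paper's approach avoids entirely. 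Your secondary argument --- applying both sides to $x^m$ and matching coefficients in the falling-factorial expansion of $m^n$ --- is also valid and is perhaps the closest in spirit to the paper's method, since both ultimately test the operator against monomials. In short: the paper trades a citation for brevity; you trade brevity for a self-contained inductive argument. Both are sound.
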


\begin{proof}
This result is known, but we offer a simple proof here based on a formula for $Q_k$ for a linear operator $T:\C[x]\to\C[x]$ \cite[p. 106, Prop. 216]{C}:
\begin{equation*}
  Q_k(x) = \frac{1}{k!}\sum_{j=0}^k \binom{k}{j}T[x^j](-x)^{k-j}
\end{equation*}
substituting $T=(xD)^n$ yields
\begin{equation*}
  Q_k(x) = \frac{x^k}{k!}\sum_{j=0}^k \binom{k}{j} (-1)^{k-j}j^n, 
\end{equation*}
and the result follows.
\end{proof}

We now use the previous lemma along with some properties of the hypergeometric function to obtain another formula for $\sigma_{m,n}$.

\begin{lem}\label{lemsigma2}
For nonnegative integers $m$ and $n$ which satisfy $2m\ge n-1$, the quantity $\sigma_{m,n}$ from equation (\ref{sigma}) can be written in the form
$$
\sigma_{m,n} = \frac{2\cdot 4^m (2m-n)!}{m!(4m+1)!!} p_n(m), 
$$
where 
\begin{equation}\label{p_n}
p_n(m)=\sum_{k=0}^n S(n,k)(-m)_k(1/2)_k(2m-n+1)_{n-k}
\end{equation}
is polynomial of degree $n$ in the variable $m$. Here, $S(n,k)$ is the Stirling number of the second kind as defined in equation (\ref{c_k}).
\end{lem}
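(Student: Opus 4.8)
The plan is to feed the expansion $\theta^n=(xD)^n=\sum_{k=0}^n S(n,k)\,x^k D^k$ from Lemma \ref{Q_k} into the hypergeometric representation of Lemma \ref{sigmalem1}. Since $\theta=x\frac{d}{dx}$, combining the two lemmas gives
\begin{equation*}
\sigma_{m,n}=\frac{2}{(3/2)_m}\sum_{k=0}^{n}S(n,k)\left[x^k D^k\,{}_2F_1(-m,1/2;m+3/2;x)\right]_{x=1}.
\end{equation*}
First I would rewrite each inner derivative using the standard formula $D^k\,{}_2F_1(a,b;c;x)=\frac{(a)_k(b)_k}{(c)_k}\,{}_2F_1(a+k,b+k;c+k;x)$ with $a=-m$, $b=1/2$, $c=m+3/2$, which turns the $k$-th summand into a constant multiple of ${}_2F_1(-m+k,1/2+k;m+3/2+k;x)$.

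Next I would set $x=1$. For $k>m$ the factor $(-m)_k$ vanishes, so only the terms $0\le k\le m$ survive, and for each of these the shifted series terminates; Gauss's summation theorem ${}_2F_1(a,b;c;1)=\frac{\Gamma(c)\Gamma(c-a-b)}{\Gamma(c-a)\Gamma(c-b)}$ therefore applies without convergence concerns (here $c-a-b=2m+1-k\ge m+1>0$). Evaluating yields
\begin{equation*}
{}_2F_1(-m+k,1/2+k;m+3/2+k;1)=\frac{\Gamma(m+k+3/2)\,\Gamma(2m+1-k)}{\Gamma(2m+3/2)\,\Gamma(m+1)}.
\end{equation*}

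Then I would consolidate the Pochhammer and Gamma factors. Using $(m+3/2)_k=\Gamma(m+k+3/2)/\Gamma(m+3/2)$ and $(3/2)_m=\Gamma(m+3/2)/\Gamma(3/2)$ collapses the prefactor to $2\Gamma(3/2)/(\Gamma(2m+3/2)\Gamma(m+1))$, and the identities $\Gamma(3/2)=\sqrt\pi/2$, $\Gamma(2m+3/2)=\frac{(4m+2)!}{4^{2m+1}(2m+1)!}\sqrt\pi$, and $(4m+2)!=2^{2m+1}(2m+1)!\,(4m+1)!!$ reduce this constant to exactly $\frac{2\cdot 4^m}{m!\,(4m+1)!!}$. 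Writing $\Gamma(2m+1-k)=(2m-n)!\,(2m-n+1)_{n-k}$ pulls out the factor $(2m-n)!$ and leaves precisely the sum $p_n(m)$ of equation (\ref{p_n}), giving the claimed closed form. The degree assertion is then immediate by inspection: the $k$-th summand of $p_n(m)$ is the product of $(-m)_k$ (degree $k$ in $m$), the constant $(1/2)_k$, and $(2m-n+1)_{n-k}$ (degree $n-k$ in $m$), hence has degree $n$, so $\deg p_n\le n$, with the coefficient of $m^n$ equal to $c_n:=\sum_{k=0}^n S(n,k)(-1)^k(1/2)_k\,2^{n-k}$; I would verify $c_n\ne0$ from the generating function $\sum_{n\ge0}c_n\frac{t^n}{n!}=e^{-t/2}(\cosh t)^{-1/2}$.

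The main obstacle I anticipate is neither the differentiation nor Gauss's theorem but the careful Gamma-function bookkeeping, and in particular the boundary case $2m=n-1$ permitted by the hypothesis. There $(2m-n)!=\Gamma(0)$ is singular while simultaneously $p_n(m)=0$ (each surviving term carries the vanishing factor $(2m-n+1)_{n-k}=(0)_{n-k}$), so the product $(2m-n)!\,p_n(m)$ is an indeterminate $0\cdot\infty$. The right reading is to recombine $(2m-n)!\,(2m-n+1)_{n-k}=\Gamma(2m+1-k)$ before passing to the limit, which is finite because only $k\le m$ contribute and then $2m+1-k\ge m+1>0$; confirming that this recovers the correct finite value of $\sigma_{m,n}$ at the boundary, and that $2m\ge n-1$ is the sharp range of validity, is the delicate step, while the remaining manipulations are routine.
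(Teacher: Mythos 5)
Your argument is correct and follows the paper's proof essentially step for step: expand $\theta^n$ via Lemma \ref{Q_k}, apply the derivative formula for ${}_2F_1$, evaluate at $x=1$ with Gauss's summation theorem, and consolidate the Gamma factors into $\frac{2\cdot 4^m}{m!(4m+1)!!}$. The only additions are your (correct) identification of the leading coefficient of $p_n$ and your careful treatment of the boundary case $2m=n-1$ via the recombination $(2m-n)!\,(2m-n+1)_{n-k}=\Gamma(2m+1-k)$ --- points the paper passes over silently (and which are harmless downstream, since the lemma is only invoked for $m\ge n$).
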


\begin{proof}
We first use Lemma \ref{sigmalem1} and Lemma \ref{Q_k} together with the relations (\cite[p. 49, l. 7]{R})
$$
F(a,b;c;1) = \frac{\Gamma(c) \Gamma(c-a-b)}{\Gamma(c-a) \Gamma(c-b)} \qquad (\text{Re}(c-a-b)>0)
$$
and (\cite[p. 69, ex. 1]{R})
$$
\frac{d}{dx} F(a, b;c;x) = \frac{ab}{c} F(a+1,b+1;c+1,x)
$$
to obtain
\begin{align*}
\sigma_{m,n} &= \frac{2}{(3/2)_m} \left[\sum_{k=0}^n S(n,k)x^k D^kF(-m,1/2;m+3/2,x)\right]_{x=1}\\
 &= \frac{2}{(3/2)_m} \left[\sum_{k=0}^n S(n,k) {x^k} \frac{(-m)_k(1/2)_k}{(m+3/2)_k}F\left(-m+k,\frac{1}{2}+k;m+\frac{3}{2}+k,x\right)\right]_{x=1}\\
 &= \frac{2}{(3/2)_m} \sum_{k=0}^n S(n,k) \frac{(-m)_k(1/2)_k}{(m+3/2)_k} \frac{\Gamma(m+3/2+k)\Gamma(2m-k+1)}{\Gamma(2m+3/2)\Gamma(m+1)},
\end{align*}
which is valid for $2m>n-1$. 

Next, we use the property (\ref{gampoc}) several times to obtain, for $2m\ge n-1$,
\begin{align*}
\sigma_{m,n} &= \frac{2}{m!}\frac{\Gamma(m+3/2)\Gamma(2m-n+1)}{(3/2)_m\Gamma(2m+3/2)} \sum_{k=0}^n S(n,k) (-m)_k(1/2)_k(2m-n+1)_{n-k}\\
&=\frac{2}{m!}\frac{\Gamma(m+3/2)\Gamma(2m-n+1)}{(3/2)_m\Gamma(2m+3/2)} p_n(m)\\
&=\frac{2}{m!}\frac{(2m-n)!}{(3/2)_m (m+3/2)_m} p_n(m)\\
&=\frac{2}{m!}\frac{(2m-n)!}{(3/2)_{2m} } p_n(m)\\
&=\frac{2\cdot 4^m (2m-n)!}{m!(4m+1)!!} p_n(m)
\end{align*}
where $p_n(m)$ is defined as in equation (\ref{p_n}). 
\end{proof}

With these results at hand, we can now give explicit expressions for $S_{2m}(0)$ in the case where the eigenvalue sequence $\{\gamma_k\}_{k=0}^{\infty}$ can be interpolated by the monomial $p(k)=k^n$.

\begin{lem}\label{kj}
If $\gamma_k = k^n$ then $S_{2m}(0)$ as defined in equation (\ref{s2m1}) is given by
\begin{equation}\label{s2m3}
S_{2m}(0) = \frac{2^n (2m-n-1)!}{(m!)^2 (4m+1)!!} \left[4 p_{n+1}(m)+ (2m-n)p_n(m)\right] \qquad (m\ge n),
\end{equation}
where $p_j(m)$ is defined as in equation (\ref{p_n}).
\end{lem}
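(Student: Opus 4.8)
The plan is to substitute $\gamma_k = k^n$ directly into the closed form for $S_{2m}(0)$ provided by equation (\ref{s2m1}) and then to recognize the two resulting sums as instances of $\sigma_{m,n+1}$ and $\sigma_{m,n}$, to which Lemma \ref{lemsigma2} applies. The key algebraic observation is this: since $\gamma_{2k} = (2k)^n = 2^n k^n$, the quantity $(4k+1)\gamma_{2k}$ appearing in (\ref{s2m1}) becomes $2^n(4k+1)k^n = 2^n\bigl(4 k^{n+1} + k^n\bigr)$. Multiplying the monomial $k^n$ by the linear factor $(4k+1)$ thus splits it into a $k^{n+1}$ piece and a $k^n$ piece, exactly the two consecutive powers of $k$ that produce the sums $\sigma_{m,n+1}$ and $\sigma_{m,n}$ defined in (\ref{sigma}).

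Carrying out this substitution and using the definition (\ref{sigma}), I would first write
\[
S_{2m}(0) = \frac{2^n}{2 \cdot 4^m\, m!}\bigl[\, 4\,\sigma_{m,n+1} + \sigma_{m,n}\,\bigr].
\]
Next I would invoke Lemma \ref{lemsigma2} to replace each of $\sigma_{m,n+1}$ and $\sigma_{m,n}$ by its expression in terms of $p_{n+1}(m)$ and $p_n(m)$. Before doing so I must verify the hypothesis of Lemma \ref{lemsigma2}: applied to $\sigma_{m,n+1}$ it requires $2m \ge n$, and applied to $\sigma_{m,n}$ it requires $2m \ge n-1$; both follow from the standing assumption $m \ge n$ (for $n \ge 0$), so both substitutions are legitimate. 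Performing them yields
\[
S_{2m}(0) = \frac{2^n}{2 \cdot 4^m\, m!}\cdot\frac{2\cdot 4^m}{m!\,(4m+1)!!}\bigl[\,4(2m-n-1)!\,p_{n+1}(m) + (2m-n)!\,p_n(m)\,\bigr].
\]

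Finally, I would simplify the prefactor, where the factors $2 \cdot 4^m$ cancel against the incoming $2\cdot 4^m$ to leave $(m!)^2(4m+1)!!$ in the denominator, and then factor the common $(2m-n-1)!$ out of the bracket using $(2m-n)! = (2m-n)(2m-n-1)!$, arriving at the claimed identity (\ref{s2m3}). I do not expect a genuine obstacle here: every ingredient has been prepared by the preceding lemmas, and the statement is in essence a corollary of Lemma \ref{lemsigma2} combined with the linearity of the sum (\ref{s2m1}) in the eigenvalues $\gamma_{2k}$. The only points that demand care are the bookkeeping of the Pochhammer symbols, factorials, and powers of $4$, and the confirmation that the hypothesis $m \ge n$ is strong enough to justify both applications of Lemma \ref{lemsigma2} (and to keep the factorial $(2m-n-1)!$ meaningful, with any small-index boundary case inherited directly from that lemma).
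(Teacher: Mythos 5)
Your proposal is correct and follows essentially the same route as the paper: substitute $\gamma_{2k}=(2k)^n$ into (\ref{s2m1}), split $(4k+1)k^n$ into $4k^{n+1}+k^n$ to obtain $S_{2m}(0)=\frac{2^{n-1}}{4^m m!}\left(4\sigma_{m,n+1}+\sigma_{m,n}\right)$, and then apply Lemma \ref{lemsigma2} and simplify. Your explicit check that $m\ge n$ suffices for both applications of Lemma \ref{lemsigma2} is a point the paper leaves implicit, but the argument is the same.
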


\begin{proof}
Fix a nonnegative integer $n$ and let $\gamma_k=k^n$. Equations (\ref{s2m1}) and (\ref{sigma}) yield
\begin{align*}
S_{2m}(0) &= \frac{1}{2\cdot 4^m m! } \sum_{k=0}^{m} \binom{m}{k}\frac{(4k + 1) (2k)^n (-1)^{k}}{(k+1/2)_{m+1}}\\
&= \frac{2^{n-1}}{ 4^{m} m!}\left(4\cdot \sigma_{m,n+1} + \sigma_{m,n}\right).
\end{align*}
An application of Lemma \ref{lemsigma2}, some simplification, and comparison with equation (\ref{p_n}) then gives
$$
S_{2m}(0) = \frac{2^{n}}{(m!)^2(4m+1)!!}\left(4\cdot(2m-n-1)! p_{n+1}(m) + (2m-n)!p_n(m)\right),
$$
which is valid for $m\ge n$, and the result follows.
\end{proof}
We have now arrived at the goal of this section, which was to develop a useful formula for $S_{2m}(0)$ in the case where the eigenvalue sequence $\{\gamma_k\}_{k=0}^{\infty}$ can be interpolated by a polynomial $p(k) = \sum_{j=0}^{n} a_j k^j$. 
\begin{prop}
If $\gamma_k = \sum_{j=0}^{n} a_j k^j$ then, for all $m\ge n$, the quantity $S_{2m}(0)$ defined in equation (\ref{s2m1}) satisfies
\begin{equation}\label{s2m4}
S_{2m}(0) = \frac{(2m-n-1)!}{(m!)^2 (4m+1)!!} P(m),
\end{equation}
where 
\begin{equation}\label{P(m)}
P(m) = \sum_{j=0}^{n} a_j 2^j (2m-n)_{n-j} [4 p_{j+1}(m)+ (2m-j)p_j(m)]
\end{equation}
and $p_j(m)$ is defined as in equation (\ref{p_n}).
\end{prop}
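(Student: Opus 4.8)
The plan is to exploit the fact that formula (\ref{s2m1}) expresses $S_{2m}(0)$ as a \emph{linear} functional of the eigenvalue sequence $\{\gamma_{2k}\}$. Writing $\gamma_k = \sum_{j=0}^{n} a_j k^j$ and inserting this into (\ref{s2m1}), so that $\gamma_{2k} = \sum_{j=0}^{n} a_j (2k)^j$, the sum splits additively over $j$. Each resulting summand is precisely the value of $S_{2m}(0)$ that one obtains from the monomial sequence $\gamma_k = k^j$; denoting that value by $S_{2m}^{(j)}(0)$, linearity gives $S_{2m}(0) = \sum_{j=0}^{n} a_j\, S_{2m}^{(j)}(0)$. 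This reduction is the entire conceptual content of the argument, and it is what lets us bootstrap from the single-monomial computation of Lemma \ref{kj} to an arbitrary interpolating polynomial.

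First I would invoke Lemma \ref{kj} with $n$ replaced by $j$ to evaluate each $S_{2m}^{(j)}(0)$. Since $m \ge n$ forces $m \ge j$ for all $0 \le j \le n$, the hypothesis $m\ge j$ of that lemma is satisfied across the whole sum, and we obtain
$$
S_{2m}(0) = \sum_{j=0}^{n} a_j\, \frac{2^j (2m-j-1)!}{(m!)^2 (4m+1)!!}\bigl[4 p_{j+1}(m) + (2m-j) p_j(m)\bigr].
$$
The only remaining task is to pull the $m$- and $n$-dependent prefactor outside the sum, and the sole obstacle here is purely a matter of bookkeeping: the factorial $(2m-j-1)!$ still depends on the summation index $j$.

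I would resolve this with the elementary identity
$$
(2m-j-1)! = (2m-n-1)!\,(2m-n)_{n-j},
$$
which holds because the rising factorial $(2m-n)_{n-j}$ is exactly the product of the $n-j$ consecutive integers $(2m-n)(2m-n+1)\cdots(2m-j-1)$ (and reduces to $1$ when $j=n$, by the convention $(\alpha)_0 = 1$). Substituting this and factoring out $\dfrac{(2m-n-1)!}{(m!)^2 (4m+1)!!}$ yields
$$
S_{2m}(0) = \frac{(2m-n-1)!}{(m!)^2 (4m+1)!!} \sum_{j=0}^{n} a_j\, 2^j (2m-n)_{n-j}\bigl[4 p_{j+1}(m) + (2m-j) p_j(m)\bigr],
$$
and the inner sum is precisely $P(m)$ as defined in (\ref{P(m)}). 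This completes the argument: no analytic input beyond Lemma \ref{kj} is needed, only the linearity reduction together with the factorial manipulation above.
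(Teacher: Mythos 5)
Your proof is correct and follows essentially the same route as the paper's: split the sum by linearity over the monomials $k^j$, apply Lemma \ref{kj} term by term (valid since $m\ge n\ge j$), and absorb the $j$-dependence of the factorial via $(2m-j-1)!=(2m-n-1)!\,(2m-n)_{n-j}$. The paper compresses this last step into ``some manipulation of the factorials,'' which you have simply made explicit.
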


\begin{proof}
Let $\gamma_k=\sum_{j=0}^{n} a_j k^j$. Equations (\ref{s2m1}) and (\ref{sigma}) yield
\begin{align*}
S_{2m}(0) &= \frac{1}{2\cdot 4^m m! } \sum_{k=0}^{m} \binom{m}{k}\frac{(4k + 1) \sum_{j=0}^{n}a_j(2k)^j (-1)^{k}}{(k+1/2)_{m+1}}\\
&= \sum_{j=0}^{n}a_j\left[\frac{1}{2\cdot 4^m m! }\sum_{k=0}^{m} \binom{m}{k}\frac{(4k + 1) (2k)^j (-1)^{k}}{(k+1/2)_{m+1}}\right].
\end{align*}
The quantity in the square brackets is the $S_{2m}(0)$ corresponding to $\gamma_k=k^j$, so Lemma \ref{kj} yields
\begin{align*}
S_{2m}(0) &=\sum_{j=0}^{n}a_j\frac{2^j (2m-j-1)!}{(m!)^2 (4m+1)!!} \left[4 p_{j+1}(m)+ (2m-j)p_j(m)\right],
\end{align*}
which is valid for $m\geq n$. Some manipulation of the factorials now gives the desired result.
\end{proof}

\section{Form and Order}\label{s:Form and Order}

In this section, we seek to determine what form a polynomial $p$ must have if the sequence $\{p(k)\}_{k=0}^{\infty}$ is a multiplier sequence for the Legendre basis. The conditions that we determine will, in turn, impose conditions on the order of the differential operator associated with the given sequence. We begin by showing that any such polynomial must have even degree. This settles open question (2) posed in section 5 of \cite{bdfu}, a paper of the second author.

\begin{thm} \label{thm:noodd}
Suppose that $p \in \mathbb{R}[x]$ is a polynomial of odd degree. Then $\seq{p(k)}$ is not a multiplier sequence for the Legendre basis.
\end{thm}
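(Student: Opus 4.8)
The plan is to exploit the differential-operator representation $T=\sum_{k\ge0}S_k(x)D^k$ of the diagonal operator $T$ attached to the sequence $\{\gamma_k\}=\{p(k)\}$, and to extract a sign condition on the numbers $S_{2m}(0)$ from the hyperbolicity-preserving hypothesis. First I would record the necessary condition coming from Laguerre--P\'olya theory. By standard results on hyperbolicity-preserving operators, the symbol $T[e^{-xy}]=\sum_{n\ge0}\frac{T[x^n]}{n!}(-y)^n$ belongs to the two-variable Laguerre--P\'olya class; specializing $x=0$ and using Lemma~\ref{Q0lem} (which gives $\left[T[x^n]\right]_{x=0}=n!\,S_n(0)$) shows that
$$\Phi(y):=\sum_{n\ge0}S_n(0)(-y)^n=\sum_{m\ge0}S_{2m}(0)y^{2m}\in\mathcal{L-P},$$
the last equality using $S_{2m+1}(0)=0$ from Lemma~\ref{cor1}. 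Thus $\Phi$ is an \emph{even} function in $\mathcal{L-P}$, so by the consequence of Proposition~\ref{TI} noted after it, its nonzero Taylor coefficients alternate in sign: there is a fixed $\varepsilon\in\{\pm1\}$ with $\varepsilon(-1)^mS_{2m}(0)\ge0$ for every $m$.

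Next I would feed in the closed form for $S_{2m}(0)$. Writing $p(k)=\sum_{j=0}^na_jk^j$ with $a_n\ne0$, the concluding Proposition of Section~\ref{s:Preliminaries} gives, for $m\ge n$,
$$S_{2m}(0)=\frac{(2m-n-1)!}{(m!)^2(4m+1)!!}\,P(m),$$
and the prefactor is strictly positive for all large $m$. Hence $\operatorname{sign}S_{2m}(0)=\operatorname{sign}P(m)$ for large $m$. Because $P$ is a fixed polynomial in $m$, the values $P(m)$ keep a constant sign along all large integers $m$ unless $P\equiv0$. A constant sign for $S_{2m}(0)$ over consecutive indices is incompatible with the alternation forced in the previous step; therefore the assumption that $\seq{p(k)}$ is a multiplier sequence for the Legendre basis forces $P\equiv0$, equivalently $S_{2m}(0)=0$ for all $m\ge n$.

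It then remains to prove that $P\equiv0$ is impossible when $\deg p$ is odd, and here I would use the involution $k\mapsto-1-k$ underlying the Legendre eigenvalues $k(k+1)$. Decompose $p=p_{\mathrm{sym}}+p_{\mathrm{anti}}$ into its parts even and odd under this involution. The even part is a polynomial in $k(k+1)$, say $p_{\mathrm{sym}}(k)=h(k(k+1))$, so the corresponding operator is $h(L)$ with $L$ the Legendre differential operator ($LP_k=k(k+1)P_k$); since $h(L)$ has finite order $2\deg h\le n$, its coefficient $S_{2m}(0)$ vanishes for all large $m$, and by linearity of the map $p\mapsto P$ we get $P[p]=P[p_{\mathrm{anti}}]$. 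When $n$ is odd the antisymmetric part of $k^n$ is $\tfrac12\bigl(k^n+(1+k)^n\bigr)$, of degree $n$ with leading coefficient $a_n\ne0$, so $p_{\mathrm{anti}}\ne0$. The theorem thus reduces to the injectivity statement: if $r\ne0$ is odd under $k\mapsto-1-k$, then $P[r]\not\equiv0$.

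The main obstacle is precisely this injectivity, and it is genuinely more than a leading-order computation. The coefficient of $m^{n+1}$ in $P(m)$ is the linear functional $2^{n+1}\sum_{i}a_i(2\ell_{i+1}+\ell_i)$, where $\ell_i$ is the top coefficient of the polynomial $p_i$ of Lemma~\ref{lemsigma2}; this functional can vanish for odd-degree, non-symmetric $p$, so $\deg P$ may drop below $n+1$ even though $P\not\equiv0$, and the naive argument via the leading term fails. To overcome this I would analyze the full coefficient structure of $P(m)=\sum_{i=0}^{n+1}d_i\,(2m-n)_{n+1-i}\,p_i(m)$, where the $d_i$ are the coefficients of $(4k+1)p(2k)$, or work instead from the hypergeometric representation $S_{2m}(0)\propto\left[g(\theta)\,{}_2F_1(-m,\tfrac12;m+\tfrac32;x)\right]_{x=1}$ with $g(k)=(4k+1)p(2k)$ and $\theta=x\frac{d}{dx}$, showing that the vanishing of these quantities for all large $m$ forces $g$, and hence $p$, to be symmetric. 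Establishing this non-degeneracy, namely that the only polynomially interpolated sequences whose $S_{2m}(0)$ are eventually zero are the ones symmetric under $k\mapsto-1-k$ (which necessarily have even degree), is the one step demanding real work beyond the preparatory lemmas.
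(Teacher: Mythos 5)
Your proposal reproduces the paper's strategy faithfully up to the decisive point: the symbol argument, the specialization $G_T(0,y)=\sum_m S_{2m}(0)y^{2m}\in\mathcal{L-P}$, the sign-alternation forced by Proposition~\ref{TI}, and the reduction via equations (\ref{s2m4}) and (\ref{P(m)}) to the single claim that the polynomial $P$ is not identically zero are all exactly as in the paper. But you then stop short of proving that claim. You propose to decompose $p$ into parts symmetric and antisymmetric under $k\mapsto -1-k$ and to establish an injectivity statement for the antisymmetric part, and you explicitly flag this as ``the one step demanding real work'' without carrying it out. That is a genuine gap: the entire content of the theorem beyond the preparatory lemmas lives in that step, so the argument as written does not prove the result. (Your observation that the leading coefficient of $P$ can degenerate, so that a naive top-degree computation fails, is correct and worth making; but identifying the obstacle is not the same as overcoming it.)

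The paper closes the gap with a short evaluation trick that avoids any symmetric/antisymmetric decomposition: evaluate the polynomial $P$ at the half-integer $m=\tfrac{n}{2}$. In equation (\ref{P(m)}) the factor $(2m-n)_{n-j}=(0)_{n-j}$ kills every term except $j=n$, and in the surviving term $(2m-j)p_j(m)=(2m-n)p_n(m)$ also vanishes, leaving $P\left(\tfrac{n}{2}\right)=4a_n2^n\,p_{n+1}\left(\tfrac{n}{2}\right)$. In equation (\ref{p_n}) for $p_{n+1}\left(\tfrac{n}{2}\right)$ the factor $(2m-n)_{n+1-k}=(0)_{n+1-k}$ likewise kills every term except $k=n+1$, giving
$$
P\left(\tfrac{n}{2}\right)=a_n2^{n+2}\,S(n+1,n+1)\left(-\tfrac{n}{2}\right)_{n+1}\left(\tfrac{1}{2}\right)_{n+1},
$$
and $\left(-\tfrac{n}{2}\right)_{n+1}=\prod_{j=0}^{n}\left(j-\tfrac{n}{2}\right)$ has no zero factor precisely because $n$ is odd, so $P\not\equiv 0$. (The fact that $m=\tfrac{n}{2}$ lies outside the range of validity of (\ref{s2m4}) is immaterial, since one only needs $P$ to be nonzero as a polynomial.) Your decomposition idea is not wasted --- it is essentially how the paper later proves Theorem~\ref{x^2+x} by reducing to the odd-degree case --- but for Theorem~\ref{thm:noodd} itself you should replace the unproved injectivity claim with this one-line evaluation.
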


\begin{proof}
We argue by contradiction. Let $T$ be the operator corresponding to the sequence interpolated by the polynomial $p(x)=\sum_{j=0}^n a_jx^j$ with $n$ odd and $a_n \neq 0$, and suppose that $\{p(k)\}_{k=0}^{\infty}$ is a multiplier sequence for the Legendre basis. The symbol of $T$ is given by 
$$
G_T(x,y)= \sum_{k=0}^{\infty} \frac{(-1)^k T[x^k] y^k}{k!} = T[e^{-x y}], 
$$
where the operator $T$ acts on $e^{-xy}$ as a function of $x$ alone (see \cite{bb} for a comprehensive treatment of the symbol of an operator as it relates to hyperbolicity preserving operators). Suppose that the differential operator representation of $T$ is given by
$$
T = \sum_{k=0}^{\infty} S_k(x) D^k \qquad \left(D = \frac{d}{dx}\right).
$$
The symbol $G_T$ is then given by
$$
G_T(x,y) = e^{-xy} \sum_{k=0}^{\infty} S_k(x) (-1)^k y^k
$$
As noted in \cite{bo} and \cite{fhms}, we can act on this expression (as a function of $x$ alone) by the classical multiplier sequence $\{1, 0, 0, 0, \dots\}$ and the resulting function
$$
G_T(0,y) = \sum_{k=0}^{\infty} S_k(0) (-1)^k y^k
$$
must belong to the Laguerre-P\'olya class. By equation (\ref{even}), each term with odd index is zero. Thus, 
$$
G_T(0,y) = \sum_{k=0}^{\infty} S_{2m}(0) y^{2m}
$$
is an even function which belongs to the Laguerre-P\'olya class. It follows, by Proposition \ref{TI}, that the sequence $\{S_{2m}(0)\}_{m=0}^{\infty}$ must alternate in sign (a fact that we aim to contradict). By equations (\ref{s2m4}) and (\ref{P(m)}), for $m\ge n$, 
$$
S_{2m}(0) = \frac{(2m-n-1)!}{(m!)^2 (4m+1)!!} P(m),
$$
where 
$$
P(m) = \sum_{j=0}^{n} a_j 2^j (2m-n)_{n-j} [4 p_{j+1}(m)+ (2m-j)p_j(m)],
$$
and $p_j(m)$ is defined as in equation (\ref{p_n}). Note that $P(m)$ is a polynomial in $m$ which is either identically zero, or is not identically zero. In the latter case,  the sequence $\{P(m)\}_{m=0}^{\infty},$
and therefore the sequence $\{S_{2m}(0)\}_{m=0}^{\infty},$ eventually has constant sign, which would give us our desired contradiction. We will finish the proof by demonstrating that $P(m)$ is not identically zero. Indeed,
\begin{equation} \label{eq:PPneval}
P\left(\frac{n}{2}\right)=a_n 2^n\, 4 p_{n+1}\left(\frac{n}{2}\right) = a_n 2^{n+2}\, S(n+1, n+1)\left(\frac{-n}{2}\right)_{n+1}\left(\frac{1}{2}\right)_{n+1},
\end{equation}
which is non-zero due to the fact that we have assumed $n$ is odd, $a_n\neq 0$ and, as is well-known, Stirling numbers of the second kind satisfy $S(n+1,n+1)=1$.
\end{proof}

It is worthwhile to note that equation (\ref{s2m4}) is only valid for $m>n$ and our choice of $m=n/2$ in the preceding proof does not satisfy this. However, this fact is irrelevant to the argument. All that is required is that the polynomial $P(m)$ not be identically zero and this can be achieved by considering any values for $m$ that we choose.

Next, we want to determine conditions under which an even degree polynomial may interpolate a multiplier sequence for the Legendre basis. 
\begin{thm}\label{x^2+x} If $\{ \gamma_k \}_{k=0}^{\infty}$ is a multiplier sequence for the Legendre basis which is interpolated by a polynomial $p$, then $p(x)=h(x^2+x)$ for some polynomial $h(x)\in \mathbb{R}[x]$.
\end{thm}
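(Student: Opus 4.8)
The plan is to reduce the desired conclusion to a symmetry statement and then to extract that symmetry from the sign alternation of the sequence $\{S_{2m}(0)\}$ already exploited in Theorem \ref{thm:noodd}. First I record the elementary fact that, for $p\in\R[x]$, one has $p(x)=h(x^2+x)$ for some $h\in\R[x]$ if and only if $p(x)=p(-1-x)$. Indeed, the substitution $u=x+\tfrac12$ turns the involution $x\mapsto -1-x$ into $u\mapsto -u$ and turns $x^2+x$ into $u^2-\tfrac14$; thus $p(x)=p(-1-x)$ says exactly that $u\mapsto p(u-\tfrac12)$ is even, hence a polynomial in $u^2$, hence a polynomial in $x^2+x$. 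So it suffices to prove that a polynomially interpolated Legendre multiplier sequence $\gamma_k=p(k)$ forces $p(x)=p(-1-x)$.

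For a fixed polynomial $p$, write $S_{2m}^{(p)}(0)$ for the quantity produced by (\ref{s2m1}) when $\gamma_{2k}=p(2k)$; by (\ref{s2m1}) this depends \emph{linearly} on $p$. The first step is to show that the multiplier sequence hypothesis forces the condition
$$(\star)\qquad S_{2m}^{(p)}(0)=0\ \text{ for all sufficiently large } m.$$
This is obtained exactly as in Theorem \ref{thm:noodd}: the symbol evaluation $G_T(0,y)=\sum_m S_{2m}^{(p)}(0)y^{2m}$ is an even function in $\mathcal{L-P}$, so by Proposition \ref{TI} the sequence $\{S_{2m}^{(p)}(0)\}$ alternates in sign; but by (\ref{s2m4})--(\ref{P(m)}) we have $S_{2m}^{(p)}(0)=c_m P(m)$ with $c_m>0$ and $P$ a polynomial in $m$, so if $P\not\equiv0$ then $S_{2m}^{(p)}(0)$ has eventually constant sign, contradicting alternation. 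Hence $P\equiv0$, which is precisely $(\star)$.

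The heart of the argument is then the claim that, for an arbitrary $p\in\R[x]$ (not assumed to interpolate a multiplier sequence), condition $(\star)$ holds if and only if $p(x)=p(-1-x)$. I would prove this by induction on $\deg p$. For the easy direction, note that the symmetric polynomials are spanned by the powers $(x^2+x)^d$, and that $(x^2+x)^d$ interpolates the eigenvalue sequence of $L^d$, where $L:=(x^2-1)D^2+2xD$ satisfies $L[P_k]=k(k+1)P_k$; since $L^d$ is a differential operator of order $2d$, its coefficient of $D^k$ vanishes for $k>2d$, so $S_{2m}^{(p)}(0)=0$ for $m>d$, giving $(\star)$. By linearity every symmetric $p$ satisfies $(\star)$. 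For the converse, suppose $p$ satisfies $(\star)$ and $\deg p=n$. The computation in the proof of Theorem \ref{thm:noodd} shows that a polynomial of \emph{odd} degree cannot satisfy $(\star)$ (there $P(n/2)\neq0$, so $S_{2m}^{(p)}(0)$ is not eventually zero); hence $n$ is even. Now set $s:=a_n(x^2+x)^{n/2}$, a symmetric polynomial with the same leading coefficient as $p$. Then $p-s$ has degree at most $n-1$ and, by linearity together with the easy direction, also satisfies $(\star)$. Since $n-1$ is odd, the odd-degree obstruction just invoked forces $\deg(p-s)\le n-2$, and the inductive hypothesis yields that $p-s$ is symmetric; therefore $p=s+(p-s)$ is symmetric as well. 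Combining the two directions completes the lemma, and with it the theorem.

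The main obstacle I anticipate is organizational rather than computational: one must verify carefully that the chain ``multiplier sequence $\Rightarrow$ alternation $\Rightarrow$ $P\equiv0$'' is exactly condition $(\star)$, and that the odd-degree nonvanishing established inside Theorem \ref{thm:noodd} is a statement about an \emph{arbitrary} polynomial (its proof never uses the multiplier hypothesis, only the formula (\ref{P(m)}) and the identity $S(n+1,n+1)=1$), so that it may legitimately be reapplied to $p-s$. Granting this, the only genuinely new input is the identification of $(x^2+x)^d$ with $L^d$ and the resulting truncation $S_{2m}^{(p)}(0)=0$ for $m>d$, which is immediate from $L[P_k]=k(k+1)P_k$ and the order of $L^d$.
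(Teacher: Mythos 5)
Your proof is correct and rests on exactly the same ingredients as the paper's: the sign alternation of $\{S_{2m}(0)\}$ forced by Proposition \ref{TI} applied to the even symbol $G_T(0,y)$, the representation of $S_{2m}(0)$ as a positive multiple of the polynomial $P(m)$ from (\ref{s2m4})--(\ref{P(m)}), the finite order of $h(\delta)$ to dispose of the symmetric part, and the evaluation (\ref{eq:PPneval}) showing that an odd-degree interpolating polynomial yields $P\not\equiv 0$ (which, as you correctly note, uses no multiplier-sequence hypothesis). The only difference is organizational: the paper makes a single decomposition $p(x)=h(x^2+x)+q(x)$ with $q$ an odd function and kills $q$ by one application of the odd-degree obstruction, whereas you reformulate the conclusion as the symmetry $p(x)=p(-1-x)$ and peel off leading symmetric terms by induction on the degree; both routes are valid.
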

\begin{proof} By Theorem \ref{thm:noodd}, $p$ must have even degree. Noting that 
\begin{equation}
b_k(x) = 
\begin{cases}
(x^2+x)^{k/2}  & \qquad k \text{ even},\\
x^k & \qquad k \text{ odd}.\\
\end{cases}
\end{equation}
forms a basis for $\mathbb{R}[x]$, we can expand $p$ in terms of this basis to obtain
$$
p(x) = h(x^2+x) + q(x),
$$
where $\deg h = (\deg p)/2$ and $q$ is an odd function in $\mathbb{R}[x]$. We claim that $q$ must be identically zero. By way of contradiction, suppose $q$ is not identically zero. Then $q$ is a polynomial of odd degree. Using equation (\ref{s2m1}) to calculate $S_{2m}(0)$, we have
\begin{align*}
S_{2m}(0) &=  \frac{1}{2\cdot 4^m m!} \sum_{k=0}^{m} \binom{m}{k}\frac{(4k + 1) p(2k) (-1)^{k}}{(k+1/2)_{m+1}}\\ 
          &=  \frac{1}{2\cdot 4^m m!} \sum_{k=0}^{m} \binom{m}{k}\frac{(4k + 1) \left[h((2k)^2+(2k))+q(2k)\right] (-1)^{k}}{(k+1/2)_{m+1}}.
\end{align*}
Splitting up the sum then gives
\begin{align}
S_{2m}(0)	&=  \frac{1}{2\cdot 4^m m!} \sum_{k=0}^{m} \binom{m}{k}\frac{(4k + 1) h((2k)^2+(2k)) (-1)^{k}}{(k+1/2)_{m+1}} \label{sumdecomp1}\\
					&\hskip .2 in  + \frac{1}{2\cdot 4^m m!} \sum_{k=0}^{m} \binom{m}{k}\frac{(4k + 1) q(2k) (-1)^{k}}{(k+1/2)_{m+1}} \label{sumdecomp2}.
\end{align}

The sum in (\ref{sumdecomp1}) calculates the polynomial coefficients, evaluated at zero, of the differential operator $T_h$ corresponding to the sequence $\{h(k^2+k) \}_{k=0}^{\infty}$. Since $T_h$ is a finite order differential operator, this sum must vanish for all sufficiently large $m$. Thus, for all such $m$, only the sum (\ref{sumdecomp2}) contributes to $S_{2m}(0)$, i.e.,
$$
S_{2m}(0) = \frac{1}{2\cdot 4^m m!} \sum_{k=0}^{m} \binom{m}{k}\frac{(4k + 1) q(2k) (-1)^{k}}{(k+1/2)_{m+1}}.
$$
Note that these are precisely the values of the coefficient polynomials, evaluated at zero, corresponding to the odd degree interpolated sequence $\seq{q(k)}$.  The result now follows from the proof of Theorem \ref{thm:noodd}.
\end{proof}

As an immediate consequence, we get a new proof of the following result which was proved in \cite{fhms}.
\begin{cor}
If $\{k^2+bk+c\}$ is a Legendre MS, then $b=1$.
\end{cor}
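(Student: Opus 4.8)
The plan is to apply Theorem \ref{x^2+x} directly, since the assertion is really just the degree-two instance of that structural result. I would begin by observing that the hypothesis that $\{k^2+bk+c\}_{k=0}^{\infty}$ is a Legendre multiplier sequence means precisely that this sequence is interpolated by the polynomial $p(x)=x^2+bx+c$, which has even degree $2$. Theorem \ref{x^2+x} then guarantees the existence of a polynomial $h\in\mathbb{R}[x]$ with $p(x)=h(x^2+x)$, and the degree relation $\deg h=(\deg p)/2$ recorded in the proof of that theorem forces $\deg h=1$.

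Next I would write $h(t)=\alpha t+\beta$ for real constants $\alpha,\beta$ and expand to get $p(x)=h(x^2+x)=\alpha x^2+\alpha x+\beta$. Comparing this with $p(x)=x^2+bx+c$ coefficient by coefficient yields $\alpha=1$ from the quadratic term, $\alpha=b$ from the linear term, and $\beta=c$ from the constant term. In particular $b=\alpha=1$, which is exactly the claim; note that the constant $c$ is left completely unconstrained, as expected since the corollary only pins down $b$.

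There is essentially no obstacle here: all of the analytic difficulty, namely establishing that an interpolating polynomial for a Legendre multiplier sequence must be a polynomial in $x^2+x$, has already been absorbed into Theorem \ref{x^2+x} (and, underneath it, Theorem \ref{thm:noodd} together with the coefficient formulas of Section \ref{s:Preliminaries}). The only work remaining is the trivial linear-algebra step of matching coefficients once the form $h(x^2+x)$ is in hand, so the corollary follows immediately. The one point I would be slightly careful about is to cite the degree bookkeeping $\deg h=(\deg p)/2$ explicitly, so that the reduction to a linear $h$ is fully justified rather than merely asserted.
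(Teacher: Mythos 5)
Your proof is correct and is precisely the argument the paper intends: the corollary is stated there as an immediate consequence of Theorem \ref{x^2+x}, obtained by writing $p(x)=x^2+bx+c=h(x^2+x)$ with $\deg h=1$ and matching coefficients to force $b=1$. Your extra care with the degree bookkeeping $\deg h=(\deg p)/2$ is a reasonable addition but does not change the route.
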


Theorem \ref{x^2+x} also allows us to conclude that, as a differential operator, any polynomially interpolated multiplier sequence for the Legendre basis must have finite order. 

\begin{cor}
Suppose $\{p(k)\}_{k=0}^{\infty}$ is a multiplier sequence for the Legendre basis, where $p\in\mathbb{R}[x]$, and let $T$ be the corresponding linear operator defined by $T[P_n(x)] = p(n)P_n(x)$ for all $n$. Then the differential operator representation of $T$ has only finitely many terms:
$$
T = \sum_{k=0}^{m} S_k(x) D^k.
$$
\end{cor}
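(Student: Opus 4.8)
The plan is to leverage Theorem \ref{x^2+x} to realize $T$ explicitly as a polynomial in the second-order Legendre operator, from which finiteness of the order is immediate.

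First I would invoke Theorem \ref{x^2+x} to write $p(x) = h(x^2+x)$ for some $h\in\R[x]$, so that $p(n) = h(n^2+n) = h\bigl(n(n+1)\bigr)$ for every $n$. The key observation is that the quantity $n(n+1)$ is exactly the eigenvalue appearing in the Legendre differential equation. Introducing the fixed second-order differential operator $L := (x^2-1)D^2 + 2xD$, the defining equation $(x^2-1)P_n'' + 2x P_n' - n(n+1)P_n = 0$ says precisely that $L[P_n] = n(n+1)P_n$ for all $n$; that is, $L$ is diagonal in the Legendre basis with eigenvalue $n^2+n$.

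Next I would consider the operator $h(L)$. Since $L^j[P_n] = \bigl(n(n+1)\bigr)^j P_n$, writing $h(x)=\sum_{j} c_j x^j$ gives $h(L)[P_n] = \sum_j c_j \bigl(n(n+1)\bigr)^j P_n = h\bigl(n(n+1)\bigr)P_n = p(n)P_n = T[P_n]$ for every $n$. Because $\{P_n\}_{n=0}^{\infty}$ is a basis for $\R[x]$, two linear operators agreeing on every $P_n$ agree on all of $\R[x]$, so $T = h(L)$ as operators on $\R[x]$. Now $h(L)$ is, after expanding the composition of the polynomial $h$ with the second-order operator $L$, manifestly a differential operator with polynomial coefficients of order at most $2\deg h = \deg p$.

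Finally I would appeal to the uniqueness of the differential operator representation: given a linear operator on $\R[x]$, the coefficients $S_k$ in $T = \sum_k S_k(x)D^k$ are determined recursively (from $T[1], T[x], T[x^2],\dots$, exactly as in Lemma \ref{Q0lem}), hence are unique. Since $h(L)$ is such a representation of $T$ and has order at most $\deg p$, the representation of $T$ has order at most $\deg p$; in particular it has only finitely many terms. I expect no genuine obstacle here — the entire content is the translation of the form $p=h(x^2+x)$ into $T=h(L)$ via the Legendre equation — so the only point meriting a sentence of care is the remark that agreement of $T$ and $h(L)$ on the basis, combined with uniqueness of the $S_k$, forces the finite representation.
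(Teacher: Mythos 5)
Your proposal is correct and is essentially the paper's own proof: both write $p(x)=h(x^2+x)$ via Theorem \ref{x^2+x}, observe that the Legendre operator $\delta=(x^2-1)D^2+2xD$ satisfies $\delta P_k=(k^2+k)P_k$, and conclude $T=h(\delta)$ has order at most $2\deg h=\deg p$. The only difference is that you spell out the basis-agreement and uniqueness-of-representation steps that the paper leaves implicit.
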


\begin{proof}
Let $h$ be a real polynomial for which $p(x) = h(x^2+x)$ and write
$$
h(x) = a_0+ a_1 x + \cdots +a_n x^n.
$$
Let $\delta$ be the differential operator from Legendre's differential equation
$$
\delta = (x^2-1)D^2+ 2 x D.
$$
Then $\delta P_k(x) = (k^2+k)P_k(x)$ for all $k$ and it follows that the operator $T$ can be written as
$$
T = h(\delta) = a_0 + a_1 \delta + \cdots a_n \delta^n. 
$$
From this we see that the order of $T$ is at most $2n=\deg p$.
\end{proof}
\begin{rem}
The previous corollary complements a result of Miranian \cite{Miranian}, which implies that a multiplier sequence for the Legendre basis whose corresponding differential operator has \emph{finite} order must have the form $\{h(k^2+k)\}_{k=0}^{\infty}$ where $h\in\mathbb{R}[x].$ In fact, we have shown that any multiplier sequence for the Legendre basis whose corresponding differential operator has infinite order (of which, to date, none have been discovered) cannot be interpolated by a polynomial. 
\end{rem}

\section{A Collection of Legendre Multiplier Sequences}\label{s:collection}

In this section, we demonstrate that a certain collection of sequences are multiplier sequences for the Legendre basis. The result is reminiscent of a similar result by Craven and Csordas involving the characterization of polynomially interpolated classical complex zero decreasing sequences (see \cite[Prop. 2.2]{CCczds}). We will use the Bates-Yoshida Quadratic Hyperbolicity Preserver Characterization:

\begin{thm}[Bates-Yoshida \cite{BY}] \label{BY-thm}
Suppose $Q_2,Q_1,Q_0$ are real polynomials such that $deg(Q_2)=2$, $deg(Q_1)\le 1$, $deg(Q_0)=0$. Then
\begin{equation} \nonumber
T=Q_2D^2+Q_1D+Q_0
\end{equation}
preserves hyperbolicity if and only if
\begin{equation} \nonumber
W[Q_0,Q_2]^2-W[Q_0,Q_1]W[Q_1,Q_2]\le 0, \;\; \mbox{and} \;\; Q_0 \ll Q_1 \ll Q_2.
\end{equation}
\end{thm}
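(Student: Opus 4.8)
This is the Bates--Yoshida characterization, which the authors invoke as a black box from \cite{BY}; I outline how one would establish it from scratch. The natural framework is the symbol calculus for hyperbolicity preservers referenced in \cite{bb}. The plan is to pass from the operator $T = Q_2D^2 + Q_1D + Q_0$ to its symbol and reduce the whole problem to the \emph{real stability} of a single two-variable polynomial, which, because $T$ has order $2$, will be \emph{biquadratic} and hence amenable to an explicit discriminant analysis. First I would record that, since $\deg Q_0 = 0$ forces $Q_0$ to be a nonzero constant, $T$ is degree-preserving and of full rank on the space of polynomials of each bounded degree, so none of the exceptional low-rank cases of the symbol theorem intervene. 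By the symbol criterion (cf.\ \cite{bb}), $T$ preserves hyperbolicity if and only if its symbol is stable, which for the present finite-order $T$ is equivalent to $T[(x+y)^n]$ being real stable for every $n$. A direct computation gives
\[
T\big[(x+y)^n\big] = (x+y)^{n-2}\,R_n(x,y), \qquad R_n(x,y) = n(n-1)Q_2(x) + n(x+y)Q_1(x) + (x+y)^2Q_0(x).
\]
Since $(x+y)^{n-2}$ is real stable and stability is preserved under products, the problem collapses to showing that $R_n$ is real stable for every $n\ge 2$; the payoff is that $R_n$ has degree $2$ in $x$ and degree $2$ in $y$, so its stability can be decided by elementary means.

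Next I would analyze the biquadratic $R_n$. Writing $R_n(x,y) = Q_0\,y^2 + (2xQ_0 + nQ_1)\,y + (x^2Q_0 + nxQ_1 + n(n-1)Q_2)$ and freezing $y=a\in\R$, real stability forces each fiber $R_n(x,a)$ to be hyperbolic in $x$, i.e.\ its $x$-discriminant $\Delta_{n,a}$ must be nonnegative for every real $a$ and every $n$. Viewing $\Delta_{n,a}$ as a quadratic polynomial in $a$, the requirement ``$\Delta_{n,a}\ge 0$ for all $a$'' becomes a sign condition on the leading $a^2$-coefficient together with the nonpositivity of the $a$-discriminant of $\Delta_{n,a}$; carrying out this computation and collecting the $n$-dependence, the surviving constraint is exactly the Wronskian inequality $W[Q_0,Q_2]^2 - W[Q_0,Q_1]W[Q_1,Q_2]\le 0$. (As a sanity check, writing $Q_2 = \alpha x^2 + \cdots$, $Q_1 = bx + \cdots$, $Q_0 = d$, both routes produce the same top-order constraint $b^2 \ge 4\alpha d$.) The complementary, genuinely two-variable content of stability---that $R_n$ has no common upper-half-plane zeros, not merely real fibers---is encoded by a Hermite--Biehler type interlacing of the coefficient polynomials as $y$ varies; unwinding this interlacing, uniformly in $n$, is what yields the proper-position chain $Q_0 \ll Q_1 \ll Q_2$.

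For the converse I would assume the two displayed conditions and reconstruct real stability of each $R_n$ directly: the Wronskian inequality guarantees that every real fiber $R_n(\cdot,a)$ is hyperbolic, while the proper-position chain supplies the monotone interlacing needed to upgrade fiberwise hyperbolicity to honest real stability. The main obstacle is precisely this upgrade. Fiberwise real-rootedness does \emph{not} imply real stability in general, so the sufficiency direction must exploit $Q_0 \ll Q_1 \ll Q_2$ to control how the roots of $R_n(\cdot,a)$ move as $a$ sweeps $\R$, and must do so uniformly in $n$; the boundary cases where the inequality degenerates to an equality (double roots) or where the coefficient degrees drop are where the argument is most delicate. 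The algebraic identification of the ``$\Delta_{n,a}\ge 0$ for all $a$'' condition with the compact Wronskian expression is routine but bookkeeping-heavy, and is best organized by expressing the relevant resultants through the $W[Q_i,Q_j]$ from the outset.
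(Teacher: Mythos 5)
First, a point of comparison: the paper itself gives \emph{no} proof of this statement. Theorem \ref{BY-thm} is imported wholesale from \cite{BY}, with only the remark that its proof rests on the Borcea--Br\"and\'en symbol theorem together with ``intricate arguments involving the location of zeros of the coefficient polynomials.'' Your opening reduction --- passing to $T[(x+y)^n]=(x+y)^{n-2}R_n(x,y)$ and observing that everything hinges on the real stability of the biquadratic $R_n$ --- is the correct first move and agrees with the strategy the authors attribute to \cite{BY}; the computation of $R_n$ is right, and your top-order sanity check is consistent with the Wronskian inequality.

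However, the proposal stops exactly where the theorem becomes hard, and you say so yourself. Two gaps are genuine and not merely bookkeeping. (1) \emph{Sufficiency.} You correctly note that fiberwise real-rootedness of $R_n(\cdot,a)$ for all real $a$ does not imply real stability, and you assert that the chain $Q_0\ll Q_1\ll Q_2$ ``supplies the monotone interlacing needed'' --- but no mechanism is given for converting proper position of $Q_0,Q_1,Q_2$ into a Hermite--Biehler certificate for $R_n$. The coefficients of $R_n$ in powers of $y$ are $Q_0$, $2xQ_0+nQ_1$, and $x^2Q_0+nxQ_1+n(n-1)Q_2$, i.e.\ $n$-dependent mixtures of the $Q_i$, so interlacing statements about the $Q_i$ do not transfer termwise; one needs an argument, uniform in $n$, and this is precisely the ``intricate'' part of \cite{BY}. (2) \emph{Necessity of the proper-position chain.} Symmetrically, real stability of $R_n$ yields proper position of those same mixed coefficients, not of $Q_0,Q_1,Q_2$ themselves; recovering $Q_0\ll Q_1\ll Q_2$ requires, e.g., a rescaling or limiting argument in $n$, which is asserted (``unwinding this interlacing'') but not performed. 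Smaller issues: your fiber analysis only constrains the $x$-fibers $R_n(\cdot,a)$, whereas real stability equally constrains the $y$-fibers $R_n(b,\cdot)$; the claim that the family of conditions ``$\Delta_{n,a}\ge 0$ for all $a$ and all $n$'' collapses to the single Wronskian inequality is not checked for small $n$ or for the degenerate cases where leading coefficients vanish; and the appeal to the Borcea--Br\"and\'en theorem glosses over the disjunction between the two symbols $T[(x\pm y)^n]$. As written, the proposal is a plausible road map consistent with the literature, not a proof.
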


The compact form of Bates-Yoshida Theorem is derived using the Borcea-Br\"and\'en Characterization of Hyperbolicity Preservers \cite{bb}[Theorem 5], and intricate arguments involving the location of zeros of the coefficient polynomials. With Theorem \ref{BY-thm} in hand, we prove the following.

\begin{thm}\label{fall}
Let $\delta = (x^2-1)D^2+2 x D$ where $D=d/dx$ and fix positive integers $n$ and $N$. The operator
\begin{equation}
\label{fallfac}
\delta (\delta-1\cdot 2)(\delta-2\cdot 3)\cdots\big(\delta-(n-1)(n)\big)\prod_{j=1}^{N} (\delta-A_j)
\end{equation}
is a hyperbolicity preserver whenever $-(n+1)\leq A_j \leq n(n+1)$ for all $j$. 
\end{thm}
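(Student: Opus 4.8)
The plan is to realize the given operator as $h(\delta)$, where $h(\lambda)=\lambda(\lambda-2)(\lambda-6)\cdots(\lambda-(n-1)n)\prod_{j=1}^{N}(\lambda-A_j)$, and to exploit that $\delta$ is diagonal on the Legendre basis with $\delta P_k=\lambda_k P_k$, $\lambda_k=k(k+1)$. Since $\lambda_k-j(j+1)=(k-j)(k+j+1)$, the annihilating block $\delta(\delta-2)\cdots(\delta-(n-1)n)$ kills $P_0,\dots,P_{n-1}$, and the eigenvalue sequence $\gamma_k=h(\lambda_k)$ vanishes for $k<n$ and is nonnegative for $k\ge n$ (using $A_j\le n(n+1)\le\lambda_k$ there). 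I would record this sign information first, since the bounds on the $A_j$ are exactly what guarantees it.

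The natural engine is closure of hyperbolicity preservers under composition, together with the observation that a second-order operator diagonal on the Legendre basis must be an affine function $a\delta+b$ of $\delta$ (it commutes with $\delta$, whose eigenvalues $\lambda_k$ are distinct). Applying Theorem \ref{BY-thm} to $\delta-A=(x^2-1)D^2+2xD-A$, I would compute $W[-A,x^2-1]=-2Ax$, $W[-A,2x]=-2A$, and $W[2x,x^2-1]=2x^2+2$, so the Wronskian condition reads $4A\big((A+1)x^2+1\big)\le 0$, which holds precisely for $A\in[-1,0]$; on that range I would also verify the proper-position condition $-A\ll 2x\ll x^2-1$. This shows $\delta-A$ is a hyperbolicity preserver exactly for $A\in[-1,0]$, and in particular $\delta$ itself is one.

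The difficulty is that the admissible range $-(n+1)\le A_j\le n(n+1)$ is far wider than $[-1,0]$, so the individual factors $\delta-A_j$ are \emph{not} hyperbolicity preservers, and the operator cannot be assembled by naively composing second-order diagonal pieces (the factorization of $h(\delta)$ into linear-in-$\delta$ factors is forced and unique). The crux is therefore to let the annihilating factors $\delta-j(j+1)$ absorb the defect of the out-of-range factors. My plan is to pair each $\delta-A_j$ with annihilating factors and to prove that each resulting composite block is a hyperbolicity preserver, so that the whole operator becomes a composition of such blocks; concretely, I would aim to show that a block such as $[\prod_{j=0}^{n-1}(\delta-j(j+1))](\delta-A)$, whose eigenvalues are nonnegative for every admissible $A$, preserves hyperbolicity.

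I expect this last step to be the main obstacle, because such a block has differential order greater than two while Theorem \ref{BY-thm} governs only second-order operators, and no second-order \emph{diagonal} operator can have eigenvalues more than affine in $\lambda_k$. To bridge the gap I would pursue one of two routes: (i) factor the composite block into genuinely second-order (necessarily non-diagonal) hyperbolicity preservers to which Theorem \ref{BY-thm} applies termwise, chosen so that their product collapses to the desired diagonal operator; or (ii) argue inductively, peeling off one second-order hyperbolicity-preserving factor at a time while tracking that both Bates--Yoshida inequalities are maintained, ultimately descending to the Borcea--Br\"and\'en characterization on which Theorem \ref{BY-thm} rests. Identifying the correct second-order factorization, equivalently the right inductive invariant encoding how an annihilating factor compensates for an out-of-range shift, is where the substance of the argument lies, and I would expect the endpoints $A=n(n+1)$ and $A=-(n+1)$ to surface as the extreme configurations in which these inequalities become tight.
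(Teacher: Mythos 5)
Your proposal correctly diagnoses the obstruction --- the individual diagonal factors $\delta-A_j$ are hyperbolicity preservers only for $A_j\in[-1,0]$ (your Wronskian computation for $\delta-A$ is right), so the operator cannot be assembled by composing second-order diagonal pieces --- and your route (i), factoring into non-diagonal second-order preservers whose product collapses to the diagonal operator, is in fact the shape of the paper's argument. But you stop exactly at the step you yourself identify as ``where the substance of the argument lies,'' and that step is not a routine verification: it requires two specific identities that your outline does not supply. The first is the factorization of the annihilating block into first-order pieces,
$$
\delta(\delta-1\cdot 2)\cdots\big(\delta-(n-1)n\big)=\left(\prod_{k=0}^{n-1}\left[(x^2-1)D+2(k+1)x\right]\right)D^{n},
$$
where each factor $(x^2-1)D+2(k+1)x$ has the form $qD+\alpha q'$ with $q$ hyperbolic and $\alpha\ge 0$, hence is a complex zero decreasing (so hyperbolicity preserving) operator by Theorem~8 of the Bunton et al.\ reference. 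The second is the commutation identity
$$
D^{n}(\delta-A_j)=\left[(x^2-1)D^2+2(n+1)xD+n^2+n-A_j\right]D^{n},
$$
which is the actual mechanism by which the annihilating factors ``absorb the defect'': pushing $D^n$ through $\delta-A_j$ replaces the middle coefficient $2x$ by $2(n+1)x$ and the constant term $-A_j$ by $n^2+n-A_j$, producing a genuinely non-diagonal second-order operator to which Theorem~\ref{BY-thm} applies with Wronskian condition
$$
-4(n^2+n-A_j)\left[(A_j+n+1)x^2+4(n+1)^2\right]\le 0,
$$
and this holds precisely on the claimed range $-(n+1)\le A_j\le n(n+1)$. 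Without these two identities there is no proof: your eigenvalue-nonnegativity observation is necessary but nowhere near sufficient (nonnegative eigenvalue sequences need not be multiplier sequences), and neither of your proposed routes is carried far enough to produce the renormalized second-order factors on which the whole argument turns. So the proposal is a correct plan with the central construction missing, not a proof.
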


\begin{proof}
As in the proof of Theorem 14 on p. 137 of \cite{nreup}, we may factor the operator
$$
\delta (\delta-1\cdot 2)(\delta-2\cdot 3 )\cdots(\delta-(n-1)\cdot n)
$$
into a product of operators:
$$
\left(\prod_{k=0}^{n-1}\left[(x^2-1)D+2(k+1)x\right]\right)D^{n}.
$$
From Leibniz' rule (or by using equations (6) and (7) on p. 135 of \cite{nreup}), we have
$$
D^{n} (\delta-A_j) = \left[(x^2-1)D^2 + 2(n+1) x D + n^2+n-A_j\right] D^{n}.
$$
It follows that the operator in (\ref{fallfac}) can be factored as
\begin{equation}\label{ops}
\left(\prod_{k=0}^{n-1}\left[(x^2-1)D+2(1+k)x\right]\right) T D^{n},
\end{equation}
where
\begin{equation}\label{Top}
T = \left(\prod_{j=1}^{N} \left[(x^2-1)D^2 + 2(n+1) x D + n^2+n-A_j\right] \right).
\end{equation}
By Theorem 8 of \cite{nreup}, any operator of the form 
$$
q(x) D + \alpha q'(x), 
$$
where $q\in\mathbb{R}[x]$ has only real zeros and $\alpha\geq 0$, is a complex zero decreasing (and, therefore, hyperbolicity preserving) operator. Thus, the operators in the product, along with $D^n$, of equation (\ref{ops}) are hyperbolicity operators. It remains to show that the operators appearing in the product for $T$ in equation (\ref{Top}) are hyperbolicity preserving. We do this by applying Theorem \ref{BY-thm}. It is clear that the coefficient polynomials of the operator are in proper position. Thus, we only need to find conditions on $A_j$ under which
$$
W[n^2+n-A_j, x^2-1]^2 - W[n^2+n-A_j, 2(n+1)x]W[2(n+1)x, x^2-1]\leq 0
$$
for all $x\in\R$, where $W[f,g]$ denotes the Wronskian of $f$ and $g$
$$
W[f,g] = fg'-f'g. 
$$
A calculation shows that the inequality in question reduces to 
$$
-4(n^2+n-A_j)\left[(A_j+n+1)x^2+4(n+1)^2\right]\leq 0,
$$
and the result follows.

\end{proof}

The operator $\delta$ appearing in Theorem \ref{fall} satisfies $\delta P_k(x) = (k^2+k)P_k(x)$. From this, we obtain an immediate corollary.

\begin{cor}\label{fallseq}
Fix positive integers $n$ and $N$, and let 
\begin{equation}
h(x) = x (x-1\cdot 2)(x-2\cdot 3)\cdots\big(x-(n-1)(n)\big)\prod_{j=1}^{N} (x-A_j),
\end{equation}
where $-(n+1)\leq A_j \leq n(n+1)$ for all $j$. Then $\{h(k^2+k)\}_{k=0}^{\infty}$ is a multiplier sequence for the Legendre basis. 
\end{cor}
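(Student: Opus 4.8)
The plan is to recognize this corollary as a direct translation of Theorem \ref{fall} from the language of operators into the language of eigenvalue sequences, with the single crucial link being that $\delta$ is diagonal on the Legendre basis. First I would observe that the polynomial $h$ given in the statement,
$$
h(x) = x(x - 1\cdot 2)(x-2\cdot 3)\cdots(x-(n-1)n)\prod_{j=1}^N(x-A_j),
$$
is precisely the polynomial obtained by reading \eqref{fallfac} with $\delta$ replaced by the scalar variable $x$. Consequently, substituting the operator $\delta$ back in produces exactly the operator
$$
h(\delta) = \delta(\delta-1\cdot 2)(\delta-2\cdot 3)\cdots(\delta-(n-1)n)\prod_{j=1}^N(\delta-A_j)
$$
appearing in \eqref{fallfac}. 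Since $h$ is a genuine polynomial (an explicit finite product of linear factors), $h(\delta)$ is a well-defined finite composition of the operators $\delta - c$, and hence a linear operator on $\mathbb{R}[x]$.

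Next I would establish that $h(\delta)$ acts diagonally on the Legendre basis. Because $\delta P_k(x) = (k^2+k)P_k(x)$ for every $k$, each factor $(\delta - c)$ sends $P_k$ to $(k^2+k-c)P_k$, and iterating through the product gives
$$
h(\delta)[P_k(x)] = h(k^2+k)\,P_k(x) \qquad (k=0,1,2,\dots).
$$
Thus $h(\delta)$ is diagonal with respect to $\{P_k\}_{k=0}^{\infty}$, and its eigenvalue sequence is exactly $\{h(k^2+k)\}_{k=0}^{\infty}$.

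Finally, the hypothesis $-(n+1)\le A_j\le n(n+1)$ is precisely the condition under which Theorem \ref{fall} guarantees that the operator \eqref{fallfac}, namely $h(\delta)$, is a hyperbolicity preserver. Having exhibited $h(\delta)$ as an operator that is simultaneously diagonal on the Legendre basis and hyperbolicity preserving, the definition of a multiplier sequence for the Legendre basis given in Section \ref{s:Introduction} immediately identifies its eigenvalue sequence $\{h(k^2+k)\}_{k=0}^{\infty}$ as such a sequence, completing the argument.

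I do not expect a genuine obstacle here: all of the analytic difficulty has already been absorbed into Theorem \ref{fall} (and, through it, into the Bates-Yoshida characterization, Theorem \ref{BY-thm}). The only points that require any care are the two bookkeeping verifications --- that the product of linear factors in $\delta$ really coincides with $h(\delta)$, and that the eigenvalues $k^2+k$ of $\delta$ propagate correctly through the product --- both of which are routine. This is why the result is stated as an \emph{immediate} corollary.
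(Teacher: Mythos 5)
Your proposal is correct and follows exactly the paper's (very brief) derivation: the authors likewise note only that $\delta P_k(x) = (k^2+k)P_k(x)$ and then declare the corollary immediate from Theorem \ref{fall}. You have simply spelled out the routine bookkeeping that the paper leaves implicit.
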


We suspect that these results are not as sharp as they could be. For example, Theorem \ref{fall}, indicates that operators of the form $\delta(\delta+A)$ are hyperbolicity preserving for $-2\leq A\leq 2$ but we suspect that these are hyperbolicity preserving for $-2\leq A\leq 4\sqrt{2}-2$ (we elucidate on this point in section \ref{s:symbolcurve} below). Furthermore, based on the similarity of the CZDS results, we suspect the following problem can be answered in the affirmative.

\begin{problem}
Is it true that an operator of the form $(\ref{fallfac})$ is a complex zero decreasing operator if and only if $-n(n+1)\leq A_j \leq n+1$ for all $j\in\{1, 2, 3, \dots, N\}$?
\end{problem}

\section{Geometry of the Symbol Curve}\label{s:symbolcurve}

In this final section, we use a beautiful result contained in \cite{BBmv} to pose a conjecture about the classification of quartic polynomials which interpolate multiplier sequences for the Legendre basis. The result we refer to deserves to be better known and we state a special case of it here for the convenience of the reader. 
\begin{thm}\label{curve}\emph{(see \cite[Corollary 4.4.5]{BBmv})} Let $T$ be a finite order differential operator with
$$
T = \sum_{k=0}^{n} Q_k(x) D^k \qquad \left(D=\frac{d}{dx}\right).
$$
Then $T$ is hyperbolicity preserving if and only if the symbol curve in $\mathbb{R}^2$
$$
0 = \sum_{k=0}^{n} (-1)^k Q_k(x) y^k 
$$
has $n$ intersections (counted with multiplicity) with every line of positive slope. 
\end{thm}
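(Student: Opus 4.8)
The plan is to obtain Theorem~\ref{curve} from the Borcea--Br\"and\'en characterization of hyperbolicity preservers via the symbol, which already appeared in the proof of Theorem~\ref{thm:noodd}. First I would record that, with $T$ acting on $e^{-xy}$ in the variable $x$,
\[
G_T(x,y) = T[e^{-xy}] = e^{-xy}\sum_{k=0}^{n}(-1)^k Q_k(x)\, y^k .
\]
Since the exponential factor is real, positive on the reals, and never zero, the real zero locus of $G_T$ is exactly the symbol curve $0 = \sum_{k=0}^n (-1)^k Q_k(x)\,y^k$; more importantly, $G_T$ vanishes at a complex point $(x,y)$ if and only if $F(x,y):=\sum_{k=0}^n (-1)^k Q_k(x)\, y^k$ does. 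This reduces the theorem to a statement about the vanishing of the single bivariate polynomial $F$.

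Next I would invoke the Borcea--Br\"and\'en transcendental characterization \cite[Theorem~5]{bb}: a finite order operator $T$ preserves hyperbolicity precisely when its symbol is real stable, i.e.\ nonvanishing whenever $\operatorname{Im}x>0$ and $\operatorname{Im}y>0$ (apart from the degenerate low-rank cases). By the previous paragraph this says exactly that $F$ is real stable. The sign conventions matter here and must be verified: the factor $(-1)^k$ built into $F$ is what makes the \emph{product of upper half-planes}, rather than a half-plane of opposite orientation, the correct stability region, and it is this choice that will match lines of \emph{positive} slope below. (For instance $x+y$ is real stable while $x-y$ is not, the distinction being precisely which of $x-D$ and $x+D$ preserves hyperbolicity.)

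The heart of the argument is the restriction characterization of real stability. I would parametrize a line of positive slope as $t\mapsto(a+t,\,b+st)$ with $a,b\in\R$, $s>0$, and set $g(t)=F(a+t,\,b+st)$. Observe that $\operatorname{Im}(a+t)>0$ and $\operatorname{Im}(b+st)=s\,\operatorname{Im}(t)>0$ hold simultaneously exactly when $\operatorname{Im}(t)>0$. Hence if $F$ is real stable then $g$ has no zero with $\operatorname{Im}(t)>0$; as $g$ has real coefficients its nonreal zeros occur in conjugate pairs, so $g$ must in fact be real-rooted. Conversely, a hypothetical zero $(x_0,y_0)$ of $F$ with $\operatorname{Im}x_0,\operatorname{Im}y_0>0$ lies on a positive-slope line of slope $s=\operatorname{Im}y_0/\operatorname{Im}x_0>0$ through a suitable real base point, at a parameter $t_0$ with $\operatorname{Im}t_0>0$, so the corresponding $g$ would have a nonreal zero. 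Thus real-rootedness of all these restrictions is equivalent to real stability of $F$.

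It remains to match the count. The number of real intersections of the line with the symbol curve, with multiplicity, is the number of real zeros of $g$, and $\deg g$ equals the degree of the top homogeneous part of $F$ evaluated at the direction $(1,s)$. The decisive point is that for a real stable $F$ this leading form cannot vanish on the open positive orthant, so $\deg g$ is the full degree of the symbol curve for \emph{every} positive-slope line; this common value is the $n$ in the statement, and together with real-rootedness it yields exactly $n$ real intersections. Conversely, a loss of stability shows up either as a nonreal zero of some $g$ or as a drop in $\deg g$ (an intersection escaping to infinity, as happens for $x-y$ along slope $1$), and in either case fewer than $n$ real intersections occur. I expect the main obstacle to be exactly this degree-and-multiplicity bookkeeping at infinity, together with pinning down the orientation in \cite[Theorem~5]{bb} so that ``positive slope'' is literally the right family of directions; the forward, conjugate-pair implication is immediate, while these two points are the substance carried by \cite[Corollary~4.4.5]{BBmv}.
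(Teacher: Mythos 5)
The paper does not actually prove Theorem \ref{curve}: it is quoted (as ``a special case'') from \cite[Corollary 4.4.5]{BBmv} and used as a black box in Section \ref{s:symbolcurve}, so there is no internal proof to compare yours against. Your reconstruction via \cite[Theorem 5]{bb} plus the line-restriction characterization of real stability is the natural route, and its core --- the equivalence between stability of $F(x,y)=\sum_k(-1)^kQ_k(x)y^k$ and real-rootedness of every restriction $t\mapsto F(a+t,b+st)$ with $s>0$, together with the non-vanishing of the leading form of a stable polynomial on the open positive quadrant --- is sound, including your sign check that the factor $(-1)^k$ is what matches positive slopes.

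Two points remain genuinely open in your sketch. First, \cite[Theorem 5]{bb} is a trichotomy: besides stability of $G_T(x,y)$ there is the alternative that $G_T(x,-y)$ lies in the closure of stable polynomials, and the degenerate case of operators whose range has dimension at most two. You wave at these parenthetically; both must be excluded for a nonzero finite-order differential operator (the latter because such an operator has infinite-dimensional range, the former because $e^{xy}\sum_kQ_k(x)y^k$ restricted to the diagonal $x=y=t$ is $e^{t^2}$ times a polynomial, which never lies in $\mathcal{L-P}$ unless the operator is zero). Second, and more seriously, the count your argument produces is the \emph{total degree} of the symbol polynomial $F$, not the order $n$ of the operator, and these differ in general: for $T=\delta=(x^2-1)D^2+2xD$ one has $n=2$ while $F=(x^2-1)y^2-2xy$ has degree $4$, and the line $y=x$ meets the symbol curve in four points counted with multiplicity. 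Your closing sentence identifies ``the full degree of the symbol curve'' with ``the $n$ in the statement'' without justification, and with $n$ read literally as the order the stated equivalence is false (multiplication by $x$ preserves hyperbolicity, has $n=0$, yet its symbol curve $x=0$ meets every line of positive slope once). This is arguably an imprecision in the paper's paraphrase of \cite[Corollary 4.4.5]{BBmv} rather than an error of yours, but as written your argument establishes the version with $\deg F$ intersections, not the statement as printed.
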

Note that the symbol curve can be calculated as $T[\exp(-xy)]$, where $T$ acts on the variable $x$ alone.

Now, by the results of Section \ref{s:Form and Order}, any quartic Legendre MS (up to a multiplicative constant) has the form
$$
\{(k^2+k)^2+ b (k^2+k) + c\}_{k=0}^{\infty}.
$$

We will first focus on the case $c=0$. As noted after Corollary \ref{fallseq}, if $-2\leq b \leq 2$, the sequence is a multiplier sequence for the Legendre basis. 

Now, since every Legendre MS is also a classical multiplier sequence, we can rule out several values of the parameter $b$. The sequence begins
$$
0, 4+2b, \dots
$$
and, since the sequence will eventually always be positive, we must have that all the terms are nonnegative. Therefore, any such sequence with $b<-2$ cannot be a (classical and, therefore,) multiplier sequence for the Legendre basis. Furthermore, applying the sequence to $e^x$ in the standard basis yields
$$
T[e^x] = \frac{(k^2+k)^2+b(k^2+k)}{k!} x^k = (x^2+6x+2+b)(2+x)x e^x
$$
with zeros
$$
x=0, x=-2, x= -3 \pm \sqrt{7-b}
$$
from which we see that any such sequence with $b>7$ cannot be a (classical and, therefore,) multiplier sequence for the Legendre basis. It now remains to find out what happens for $2<b\leq 7$. Using Theorem \ref{curve}, we want to determine conditions under which every line with positive slope will have the correct number of intersections with the curve
$$
T[e^{-xw}]=0.
$$
For our quartic operator, this is equivalent to examining the intersection property for the curve  
\begin{equation}\label{symbolcurve}
14 x^2 w^2-8 x^3 w^3+x^4 w^4-2 w^4 x^2-6 w^2+8 x w^3+w^4-4 x w+b x^2 w^2-b w^2-2 b x w+c = 0.
\end{equation}
In Figure \ref{symbol3}, we have graphed this curve for $c=0$ and $b=3$, $b=3.6569$, and $b=4$. 
\vskip .1 in 
\begin{figure}[h]
\centering
\includegraphics[width=1.5in]{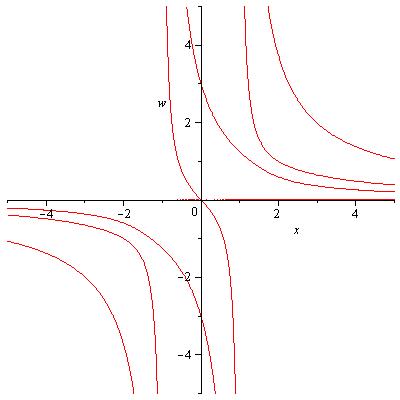}
\hskip .2 in
\includegraphics[width=1.5in]{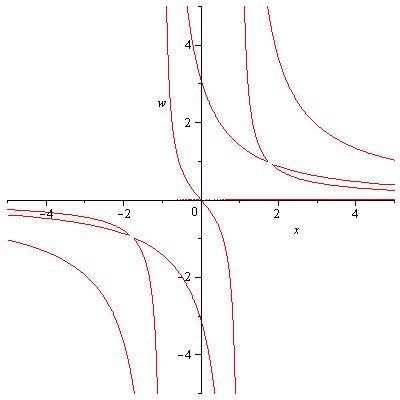}
\hskip .2 in 
\includegraphics[width=1.5in]{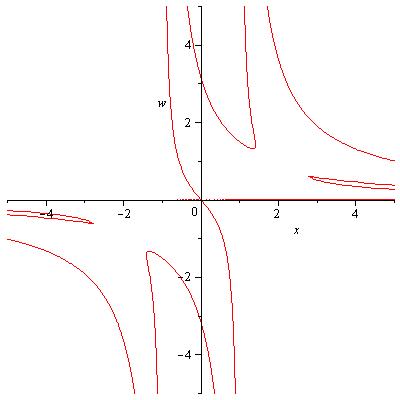}
\caption{The symbol curve for $c=0$ and various choices of $b$.}
\label{symbol3}
\end{figure}
We can see a ``breaking point'' somewhere around $b=3.6569$. To find the exact value of $b$, we seek to find when (\ref{symbolcurve}) with $c=0$ has multiple zeros. An analysis of the discriminant then shows that the target value is $b=4\sqrt{2}-2\approx 3.656854248$. 

For general values of $c$, a similar analysis has given us reason to believe that the following conjecture is true.

\begin{conjecture}
For the sequence $\{(k^2+k)^2+ b (k^2+k)+c\}_{k=0}^{\infty}$ to be a multiplier sequence for the Legendre basis, it is necessary and sufficient that $b$ and $c$ lie in the region in the $bc$-plane bounded by the $b$-axis, the parabola $c=(b+2)^2/8-4$, and the curve
\begin{align*}
0&=b^5- (c+15) b^4+ 4(c+12)b^3+8 (c^2+24 c + 19) b^2- 16(10 c^2+101 c+ 33)b\\
 &\hskip .2 in - 16( c^3 -50 c^2 - 185 c +63),
\end{align*}
as depicted in Figure \ref{sharkfin}.
\end{conjecture}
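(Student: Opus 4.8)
The plan is to realize the conjectured region as the intersection of two regions in the $bc$-plane: the set of $(b,c)$ for which the sequence is a \emph{classical} multiplier sequence, and the set for which the associated fourth-order Legendre operator is genuinely hyperbolicity preserving. Writing $h(u)=u^2+bu+c$, the relevant operator is $T=h(\delta)=\delta^2+b\delta+c$ with $\delta=(x^2-1)D^2+2xD$, a differential operator of order $4$.

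First I would dispose of the classical constraints. Since every Legendre multiplier sequence is a classical one, the generating function $\Phi(x)=\sum_{k=0}^{\infty}\gamma_k x^k/k!$ must be a limit of hyperbolic polynomials with same-signed zeros (the P\'olya--Schur criterion). Using $\sum_k(k^2+k)x^k/k!=(x^2+2x)e^x$ and its iterate, a direct computation gives
\[
\Phi(x)=Q(x)e^x,\qquad Q(x)=x^4+8x^3+(14+b)x^2+(4+2b)x+c.
\]
Because $\gamma_k\sim k^4\to+\infty$, the sequence is of the first kind, so the classical condition is exactly that $Q$ have four real, nonpositive zeros. The boundary of this locus has two pieces: $Q(0)=c=0$ (a zero reaching the origin), which yields the $b$-axis; and $\operatorname{disc}_x Q=0$ (a real double zero), which I would compute and identify with the quintic in the statement. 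As a check, at $c=0$ one finds $\operatorname{disc}_x Q=(b+2)^2(b-6)^2(b-7)$, which matches the quintic there and exhibits the classical endpoints $b=-2$ and $b=7$; one must then determine which arc of $\operatorname{disc}_x Q=0$ actually bounds the real-rootedness region (a transversal crossing such as $b=7$ rather than a tangential touch such as $b=6$).

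Next I would impose the Legendre-specific condition through the symbol curve and Theorem \ref{curve}. The symbol $T[e^{-xw}]=0$ is the curve (\ref{symbolcurve}), and $T$ preserves hyperbolicity if and only if every line of positive slope meets it in exactly four points counted with multiplicity. Substituting a line $w=sx+t$ into (\ref{symbolcurve}) produces a one-variable polynomial whose real-root count is locally constant in $(s,t,b,c)$ and can change only when the line is tangent to the curve, i.e.\ on the discriminant locus. Imposing the double-root conditions and eliminating the line parameters $s,t$ (subject to $s>0$) should yield the envelope of critical positive-slope lines, which I expect to be precisely the parabola $c=(b+2)^2/8-4$; as a consistency check, its point $b=4\sqrt2-2$ at $c=0$ reproduces the breaking value already identified.

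Finally, for sufficiency I would show that throughout the interior of the region bounded by the three arcs both conditions hold: the classical one by remaining inside the real-rootedness region of $Q$, and the Legendre one by a continuity argument establishing that the intersection count stays at four because no positive-slope line becomes tangent before the parabola is crossed. Combining necessity (each boundary arc is forced by one of the two conditions) with sufficiency gives the iff. The main obstacle is the symbol-curve step: the elimination producing the parabola is a heavy two-parameter discriminant computation that must be shown to give the stated curve with no spurious branches, the bookkeeping of intersections at infinity and of the positive-slope restriction must be handled carefully, and---most delicately---sufficiency requires a rigorous global argument that the four-real-intersection property persists across the entire interior, not merely generically. The tempting alternative of factoring $T=\delta^2+b\delta+c$ into Bates--Yoshida hyperbolicity-preserving factors as in Theorem \ref{fall} appears to fail exactly in the new range $2<b\le 4\sqrt2-2$, which is why the geometric criterion seems unavoidable and hard.
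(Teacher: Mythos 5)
This statement is a \emph{conjecture}: the paper offers no proof of it, only graphical and discriminant-based evidence for the symbol curve (\ref{symbolcurve}), so there is no argument of the authors' to compare yours against. Your plan does follow the same heuristic the authors use --- classical necessary conditions via $T[e^x]$ plus the Borcea--Br\"and\'en symbol-curve criterion of Theorem \ref{curve} --- and your computations that can be checked are consistent: $Q(x)=x^4+8x^3+(14+b)x^2+(4+2b)x+c$ is correct, the stated quintic specializes at $c=0$ to a constant multiple of $(b+2)^2(b-6)^2(b-7)$, which is indeed $\operatorname{disc}_x Q$ there, and the parabola meets $c=0$ at $b=4\sqrt2-2$ as the paper predicts. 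But what you have written is a research programme, not a proof, and the gaps you flag yourself are exactly the ones that matter.

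Concretely: (i) the elimination of the line parameters $(s,t)$ from the tangency conditions on (\ref{symbolcurve}) is never performed, so the claim that the envelope of critical positive-slope lines is precisely $c=(b+2)^2/8-4$, with no other components, is unsubstantiated; (ii) for sufficiency, vanishing of a discriminant along a path in the $(b,c)$-plane is necessary but not sufficient for the real-intersection count of a line with the symbol curve to change, and the count can also change through intersections escaping to infinity (the substituted one-variable polynomial can drop degree), so ``no tangency before the parabola'' does not by itself yield the four-intersection property on the whole interior --- you would need a base point where the count is verified to be four for \emph{every} positive-slope line and control of all degenerations, uniformly in the line; (iii) the identification of the quintic with $\operatorname{disc}_x Q$ is verified only on the slice $c=0$, and even granting it you must still determine which arcs of $\{\operatorname{disc}_x Q=0\}$ bound the real-rootedness locus (distinguishing transversal crossings from tangential touches like $b=6$) and match them to Figure \ref{sharkfin}; (iv) structurally, hyperbolicity preservation of $h(\delta)$ already implies the classical condition, so the ``intersection of two regions'' framing contributes only to necessity --- the entire sufficiency burden rests on the symbol-curve step, which is the part left open. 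As it stands the proposal restates the conjecture's supporting evidence in more detail but does not close it.
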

\begin{figure}[h]
\centering
\includegraphics[width=1.5 in]{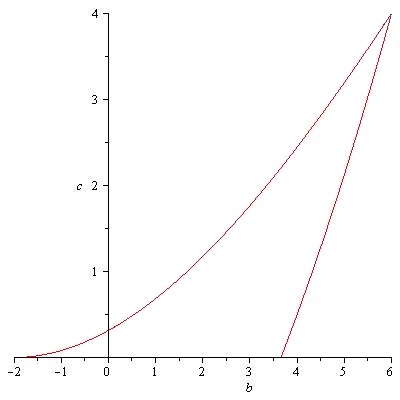}
\caption{Conjectured region of allowable values for the parameters $b$ and $c$.}
\label{sharkfin}
\end{figure} 
\vskip .2 in 

Similarly, using the symbol curve as our guide, we believe the following conjecture to be true as well.
\begin{conjecture}
Let $n$ be a positive integer and suppose $k$ is a positive integer that is at most $n-1$. Then the operator 
\begin{equation}\label{2^k}
\delta^{n-k}(\delta^k - 2^k) \qquad \qquad (\delta = (x^2-1)D^2+ 2 x D)
\end{equation}
is a hyperbolicity preserving operator.
\end{conjecture}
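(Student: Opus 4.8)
The plan is to exhibit the operator as a composition of elementary hyperbolicity preservers, stripping away everything controllable by the tools already in the paper and isolating a single residual factor, which I would then attack with the symbol-curve criterion of Theorem~\ref{curve}. First I would reduce to the base case $n=k+1$. Writing $\delta=\bigl[(x^2-1)D+2x\bigr]D$ displays $\delta$ as the composition of $(x^2-1)D+2x$ (an operator $q(x)D+\alpha q'(x)$ with $q=x^2-1$ hyperbolic and $\alpha=1\ge 0$, hence complex-zero-decreasing by Theorem~8 of \cite{nreup}) and $D$; thus $\delta$ is itself a hyperbolicity preserver. Since the class of hyperbolicity preservers is closed under composition and
$$
\delta^{n-k}(\delta^k-2^k)=\delta^{\,n-k-1}\circ\bigl[\delta(\delta^k-2^k)\bigr]\qquad(n-k-1\ge 0),
$$
it suffices to prove that $\delta(\delta^k-2^k)$ preserves hyperbolicity for every $k\ge 1$.

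Next I would peel this base case using the index-raising identity $D\delta=\delta_1 D$, where $\delta_1:=(x^2-1)D^2+4xD+2$; this is the $n=1$, $A=0$ instance of the relation $D^{n}(\delta-A)=[(x^2-1)D^2+2(n+1)xD+n^2+n-A]D^{n}$ established in the proof of Theorem~\ref{fall}. Iterating gives $D\delta^{k}=\delta_1^{\,k}D$, so that
$$
\delta(\delta^k-2^k)=\bigl[(x^2-1)D+2x\bigr]\,D\,(\delta^k-2^k)=\bigl[(x^2-1)D+2x\bigr]\,(\delta_1^{\,k}-2^k)\,D.
$$
The outer factors $(x^2-1)D+2x$ and $D$ are hyperbolicity preservers, so the whole problem collapses to showing that the single operator $\delta_1^{\,k}-2^k$ preserves hyperbolicity; here $\delta_1$ is (a shift of) the Gegenbauer operator with $\delta_1 C_m^{(3/2)}=(m+1)(m+2)C_m^{(3/2)}$.

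Finally comes the residual factor. Over $\mathbb{R}$ one has $\delta_1^{\,k}-2^k=(\delta_1-2)\prod_j\bigl(\delta_1^{2}-4\cos(2\pi j/k)\,\delta_1+4\bigr)$, with an extra factor $\delta_1+2$ when $k$ is even. The real linear-in-$\delta_1$ factors are tractable: a Bates--Yoshida computation (Theorem~\ref{BY-thm}) shows that $\delta_1-A$ is a hyperbolicity preserver for $-2\le A\le 2$, covering both $A=2$ and $A=-2$. The true obstruction is the quadratic-in-$\delta_1$ factors arising from the nonreal $k$-th roots of $2^k$: these are irreducible over $\mathbb{R}$, hence genuine fourth-order operators to which the degree-two criterion of Theorem~\ref{BY-thm} does not apply, and there is no reason to expect them to preserve hyperbolicity one at a time. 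I would therefore treat $\delta_1^{\,k}-2^k$ globally via Theorem~\ref{curve}: setting $\delta_1^{\,j}[e^{-xy}]=g_j(x,y)e^{-xy}$, the $g_j$ obey
$$
g_{j+1}=(x^2-1)\bigl(\partial_x^2-2y\,\partial_x+y^2\bigr)g_j+4x\bigl(\partial_x-y\bigr)g_j+2g_j,\qquad g_0=1,
$$
and the symbol curve is $g_k(x,y)-2^k=0$; the goal is to prove this curve meets every line of positive slope in exactly $2k$ points counted with multiplicity.

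The main obstacle is precisely this last step. The shift by $2^k$ turns the $k$ would-be real roots into $k$ roots on the circle $|z|=2$, half of them nonreal, so there is no splitting into real second-order pieces and no direct appeal to Theorem~\ref{BY-thm}; one must instead control, uniformly in $k$, the global geometry of the symbol curve. A cleaner reformulation would be to verify the Borcea--Br\"and\'en stability criterion \cite{bb} for the bivariate symbol $g_k(x,y)-2^k$ directly, that is, that it is nonzero whenever $\operatorname{Im}x>0$ and $\operatorname{Im}y>0$. Establishing this stability for all $k$ is where I expect essentially all of the difficulty to lie, and is presumably the reason the statement is offered only as a conjecture.
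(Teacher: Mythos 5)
The statement you are addressing is one of the paper's \emph{conjectures}: the authors give no proof, remarking only that ``many cases'' (those where $z^k-2^k$ splits into real linear factors falling under Theorem~\ref{fall}, e.g.\ $k=1,2$) can be verified, and explicitly flagging $\delta(\delta^3-8)$ as open. Your proposal is likewise not a proof, and you say so; what you have produced is a chain of reductions. Those reductions are correct and go somewhat beyond the paper's remark: the factorization $\delta=[(x^2-1)D+2x]D$, the intertwining $D\delta^k=\delta_1^kD$ with $\delta_1=(x^2-1)D^2+4xD+2$ (the $n=1$, $A=0$ case of the identity used in the proof of Theorem~\ref{fall}), and the resulting identity $\delta(\delta^k-2^k)=[(x^2-1)D+2x](\delta_1^k-2^k)D$ all check out, as does the Bates--Yoshida computation showing $\delta_1-A$ preserves hyperbolicity for $-2\le A\le 2$ (with the small caveat that Theorem~\ref{BY-thm} as stated requires $\deg Q_0=0$, so the endpoint $A=2$ should instead be handled by writing $\delta_1-2=[(x^2-1)D+4x]D$). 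You have also correctly located the obstruction: the nonreal $k$-th roots of $2^k$ produce irreducible real quadratic factors in $\delta_1$, i.e.\ genuine fourth-order operators outside the reach of every tool in the paper. This is precisely why the statement is a conjecture.

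The genuine gap is therefore the entire remaining content of the conjecture: the stability (or symbol-curve) analysis of $g_k(x,y)-2^k$ is announced but not carried out, and for $k\ge 3$ nothing in your argument or in the paper establishes it. One further structural caution: your second reduction is one-directional. Passing from ``$\delta(\delta^k-2^k)$ preserves hyperbolicity'' to ``$\delta_1^k-2^k$ preserves hyperbolicity'' replaces the goal with a sufficient but possibly strictly stronger claim, since pre- and post-composing a non-preserver with the outer factors $[(x^2-1)D+2x]$ and $D$ could still yield a preserver. (By contrast, your first reduction to $n=k+1$ is lossless, because that case is itself an instance of the conjecture.) So even a successful analysis showing $\delta_1^k-2^k$ fails to preserve hyperbolicity for some $k$ would not refute the conjecture; the strategy could dead-end without deciding it. As a roadmap your proposal is sound and consistent with the paper's stated evidence, but it does not constitute a proof, and no proof exists in the paper to compare it against.
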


We note that many cases of the previous conjecture can be verified using Theorem \ref{fall}. However, there are operators of the form (\ref{2^k}) that remain mysterious, such as $\delta(\delta^3-8)$. 
 
Finally, we leave the reader to consider the central problem which remains open for the time being.
\begin{problem}
Characterize all multiplier sequences for the Legendre basis.
\end{problem}

\end{document}